\newcommand{\R}{\mathbb{R}}
\newcommand{\N}{\mathbb{N}}
\renewcommand{\d}{\text{d}}
\newcommand{\Bc}{\mathcal{B}}
\newcommand{\Ec}{\mathcal{E}}
\newcommand{\Hc}{\mathcal{H}}
\newcommand{\Pc}{\mathcal{P}}
\newcommand{\Fc}{\mathcal{F}}
\DeclareMathOperator*{\argmin}{argmin}
\DeclareMathOperator*{\prox}{prox}
\DeclareMathOperator*{\dom}{dom}
\DeclareMathOperator*{\TV}{TV}
\DeclareMathOperator*{\KL}{KL}
\DeclareMathOperator*{\Leb}{Leb}
\DeclareMathOperator{\p}{p}
\DeclareMathOperator{\E}{E}
\setlist[enumerate]{leftmargin=.5in}
\setlist[itemize]{leftmargin=.5in}
\crefname{hypothesis}{Hypothesis}{Hypotheses}
\newtheorem{ass}{Assumption}
\crefname{example}{Example}{Example}
\title{Subgradient Langevin Methods for Sampling from Non-smooth Potentials\thanks{Submitted to the editors 02.08.2023.}}
\author{Andreas Habring\thanks{Department of Mathematics and Scientific Computing, University of Graz, Austria (\email{andreas.habring@uni-graz.at}).}
\and Martin Holler\thanks{Department of Mathematics and Scientific Computing, University of Graz, Austria (\email{martin.holler@uni-graz.at}).}
\and Thomas Pock\thanks{Institute of Computer Graphics and Vision, Graz University of
Technology, Austria (\email{pock@icg.tugraz.at}).}}
\begin{document}

\pgfkeys{/prAtEnd/global custom defaults/.style={end,
text link={For a proof see \Cref{appendix_proofs}.}
}}

\maketitle

\begin{abstract}
This paper is concerned with sampling from probability distributions $\pi$ on $\R^d$ admitting a density of the form $\pi(x) \propto e^{-U(x)}$, where $U(x)=F(x)+G(Kx)$ with $K$ being a linear operator and $G$ being non-differentiable. Two different methods are proposed, both employing a subgradient step with respect to $G\circ K$, but, depending on the regularity of $F$, either an explicit or an implicit gradient step with respect to $F$. For both methods, non-asymptotic convergence proofs are provided, with improved convergence results for more regular $F$. Further, numerical experiments are conducted for simple 2D examples, illustrating the convergence rates, and for examples of Bayesian imaging, showing the practical feasibility of the proposed methods for high dimensional data.
\end{abstract}

\begin{keywords}
Markov chain Monte Carlo methods, Unadjasted Langevin algorithm, non-smooth sampling,\\Bayesian inference, Bayesian imaging, inverse problems.
\end{keywords}

\begin{MSCcodes}
65C40 $\cdot$ 65C05 $\cdot$ 68U10 $\cdot$ 65C60
\end{MSCcodes}

\section{Introduction}
In this work we propose algorithms for sampling from a probability distribution $\pi$ on $\R^d$ admitting a density - also denoted $\pi$ - of the form
\begin{equation}\label{eq:intro:distribution}
\pi(x) = \frac{e^{-U(x)}}{\int e^{-U(y)\;\d y}}
\end{equation}
with a \emph{potential} $U:\R^d\rightarrow [0,\infty)$, $U(x)=F(x)+G(Kx)$ where $F:\R^d\rightarrow [0,\infty)$, $G:\R^{d'}\rightarrow [0,\infty)$ are both proper, convex, and lower semi-continuous (lsc), and $K:\R^d\rightarrow \R^{d'}$ is a linear operator. Such exponential - or Gibbs - densities arise frequently in applications and are derived by a minimax entropy principle for image distributions in \cite{zhu1997minimax}. In particular, we are interested in the non-smooth setting that $G$ or both, $F$ and $G$, are non-differentiable. More specifically, we assume the following regularity conditions.
\begin{ass}\label{ass:general} The functions $F$ and $G$ satisfy the following regularity assumptions:
\begin{enumerate}[(1)]
\item $F$ is convex and 
\begin{enumerate}[(a)]
\item either Lipschitz continuous with constant $L_F$\label{ass:Fa}
\item or differentiable with Lipschitz continuous gradient with constant $L_{\nabla F}$.\label{ass:Fb}
\end{enumerate}
\label{ass:F}
\item $G$ is convex and Lipschitz continuous with constant $L_G$.\label{ass:G}
\end{enumerate}
\end{ass}
The task of sampling from such distributions frequently arises, e.g., in Bayesian inference in the field of mathematical imaging, see for instance \cite{pereyra2016proximal,durmus2022proximal} for more background information, \cite{zach2022stable} and \cite{zach2022computed} for applications in magnetic resonance imaging and computed tomography, respectively, and \cite{narnhofer2022posterior} for error quantification based on sampling techniques. In this context, one is interested in inferring an unknown high-dimensional random variable $X\in\R^d$ based on observing a random variable $Y\sim \p(y|x)$. The distribution $\p(y|x)$ and the so-called \emph{prior} distribution $\p(x)$ are subject to modeling decisions and assumed to be known for our purposes. Using Bayes' Theorem, the logarithm of the \emph{posterior} distribution of $X|Y=y$ is easily seen to be
\begin{equation}
\begin{aligned}
\log\p(x|y) = &\log\p(y|x) + \log \p(x) - \log \p(y).
\end{aligned}
\end{equation}
Given an observation of $Y$, sampling from the posterior then allows not only to estimate $X$ by computing, e.g., its empirical expected value, but also to gauge the uncertainty of its estimate \cite{narnhofer2022posterior}. The function $x\mapsto\log\p(y|x) + \log \p(x) - \log \p(y)$ here corresponds to $-F(x)-G(Kx)$ in our general model. The specific structure of composing a non-differentiable functional $G$ with a linear operator $K$ allows for using a wide range of models for the log-prior such as $-\log \p(x)\sim \|K x\|_1$ where $K$ can be, e.g., any transform promoting sparsity in a certain basis or frame, or a finite differences operator to obtain the famous Rudin-Osher-Fatemi $\TV-L^2$ model \cite{rudin1992tv_mh} or higher-order variants \cite{holler20ip_review_mh} from inverse problems and imaging\footnote{To be precise, $\p(x)\propto \exp(-\|K x\|_1)$ frequently does not induce a well-defined probability distribution as $\int \p(x)\;\d x=\infty$ if $K$ admits a non-trivial kernel. However, in conjunction with a suitable likelihood function, e.g., $-\log \p(y|x) = F(x) = \|x-y\|^2_2$ for Gaussian denoising, the posterior distribution $\p(x|y) \propto \exp (-F(x) -\|K x\|_1)$ is, in fact, well defined.}. %

In this work, we propose two different methods for sampling from $\pi$. Both employ a subgradient step with respect to $G\circ K$, but depending on the regularity of $F$, either an explicit or an implicit gradient step with respect to $F$. We provide proofs of convergence for all methods, with improved results under stronger regularity assumptions on $F$.

Typically, sampling from \eqref{eq:intro:distribution} is performed using Markov chain Monte Carlo methods. The strategy is to generate a Markov Chain $(X_k)_k$ for which it is shown that the distribution of $X_k$ approximates $\pi$ as $k\rightarrow\infty$. A popular class of such methods are discretizations of the over-damped Langevin diffusion \cite{rossky1978brownian,parisi1981correlation}
\begin{equation}\label{eq:intro:Langevin_SDE}
\d Y_t = -\nabla U(Y_t)\;\d t + \sqrt{2}\;\d B_t
\end{equation}
with $B_t$ denoting Brownian motion. Interestingly, as $U=-\log(\pi)$ occurs solely in differentiated form in \eqref{eq:intro:Langevin_SDE}, such methods rely on knowledge of $\pi(x)$ only up to a scalar multiple, that is, knowledge of the normalization constant $\int e^{-U(y)}\;\d y$ is not necessary. Indeed, under sufficient regularity conditions on $U$, respectively $\nabla U$, \cref{eq:intro:Langevin_SDE} admits $\pi$ as a stationary distribution and the distribution of $Y_t$ converges to $\pi$ in total variation norm \cite[Theorem 2.1]{roberts1996exponential} or in Wasserstein distance \cite{bolley2012convergence}. A discrete time Markov chain (MC) approximating \eqref{eq:intro:Langevin_SDE} is obtained, e.g., via the Euler-Maruyama discretization, referred to as the Unadjusted Langevin Algorithm (ULA)
\begin{equation}\label{eq:intro:Langevin_discr}
X_{k+1} =X_k -\gamma_{k+1} \nabla U(X_{k}) + \sqrt{2\gamma_{k+1}}\; B_{k+1}
\end{equation}
where $\gamma_k>0$ is the step size and $(B_k)_k$ an i.i.d. sequence of $d$-dimensional standard Gaussian random variables. For $\gamma_k=\gamma$ for all $k$, this MC admits a stationary distribution $\pi_\gamma$ which is, in general, not equal to $\pi$, but approximates $\pi$ in the sense that $\pi_\gamma\rightarrow \pi$ as $\gamma\rightarrow 0$ \cite{durmus2019analysis}. For $\gamma_k\rightarrow 0$ with $\sum_k \gamma_k =\infty$, however, a direct approximation of $\pi$ without any bias is feasible \cite{lamberton2003recursive}. Moreover, non-asymptotic convergence bounds have been shown in the total variation norm \cite{durmus2017nonasymptotic,dalalyan2017theoretical} as well as in Wasserstein distance  \cite{durmus2019high}.

\section{Related Works}
The above mentioned works rely on differentiability of the potential $U$, which is undesirable as many relevant applications naturally lead to non-differentiable potentials, e.g., regression with $L^1$ loss or Lasso prior, or non-smooth regularization functionals in Bayesian imaging or inverse problems. Remedies for this issue have been proposed in the literature. For instance, in \cite{durmus2022proximal,pereyra2016proximal} the authors propose to replace non-differentiable parts of the potential with its Moreau envelope leading to a continuously differentiable approximation of the potential as the Moreau envelope $h^\lambda$ with parameter $\lambda>0$ of a (possibly non-differentiable) functional $h$ admits the Lipschitz continuous gradient $\nabla h^\lambda(x) = \lambda^{-1}(x-{\prox}_{\lambda h}(x))$. This effectively leads to a proximal step with respect to non-differentiable parts of the potential $U$. 
In \cite{durmus2022proximal} it is shown that the obtained smooth approximation of the target density can be made arbitrarily accurate in total variation (TV) by decreasing the Moreau-Yoshida parameter. The resulting method is coined MYULA (Moreau Yoshida Unadjusted Langevin Algorithm). In the method from \cite{pereyra2016proximal}, referred to as P-MALA (Proximal Metropolis Adjusted Langevin Algorithm), on the other hand, the discrepancy between the approximation and the true target density is accounted for by introducing a Metropolis-Hastings correction within the method. In \cite{cai2022proximal}, P-MALA is used for Bayesian model selection using a nested sampling procedure. In view potentials $U$ of the form $U(x) = F(x) + G(Kx)$, a disadvantage of MYULA and P-MALA is that the proximal mapping of $G\circ K$ will in general be accessible only via an iterative algorithm, rendering these approaches computationally expensive. In \cite{brosse2017sampling} the approach of \cite{durmus2022proximal} is investigated for the particular case of $G\circ K$ being an indicator function on a compact, convex set. In \cite{luu2021sampling} the authors extend the P-MALA approach to the case of non-convex potentials. It is shown that the smooth approximation of the density can be made arbitrarily accurate in TV. The authors also prove that the resulting continuous time Langevin diffusion admits a unique solution and a stationary distribution. Despite the weakened assumptions, convergence of the discretization of the Langevin diffusion is shown in expectation. In \cite{laumont2022bayesian}, similarly to MYULA, the authors consider smoothing the target density. This time, however, the smoothing is achieved by convolving the non-smooth part with a Gaussian kernel. The gradient of the smoothed potential can be computed using Tweedie's formula which leads to a Plug and Play approach \cite{laumont2022bayesian}. In \cite{durmus2019analysis} the authors propose a subgradient as well as a proximal gradient method for direct sampling from non-smooth potentials without prior smoothing. While the former would allow for sampling from \eqref{eq:intro:distribution} in principle, it yields only suboptimal convergence behavior whenever $F$ is differentiable. Moreover, the subgradient method relies on Lipschitz continuity of $F$, which prevents its applications, for example, to models with $L^2$ loss. The proximal gradient method from \cite{durmus2019analysis}, on the other hand, guarantees better convergence behavior, but again suffers from the fact that the proximal mapping of $G\circ K$ is, in general, not explicit. Another approach has been proposed in \cite{ehrhardt2023proximal} where the authors show that convergence can be maintained despite computing the proximal map only approximately, thus allowing also for its iterative computation within the algorithm. In \cite{bianchi2019passty} the authors propose a method that employs only the proximal mappings of $F$ and $G$, but relies on invertibility of $K$, which is not given in many cases of interest, such as $K$ being a finite differences operator. In \cite{salim2019stochastic} the authors propose a stochastic proximal-gradient sampling method, where they assume the representations $F(x) = \E_\zeta[f(x,\zeta)]$, and $G(x) = \sum_{i=1}^n G_i(x)$, $G_i(x) =\E_\zeta[g_i(x,\zeta)]$. While $f$ is assumed to admit a Lipschitz continuous gradient with respect to $x$, the functions $g_i$ can be non-smooth in $x$, however, their proximal mappings need to be accessible. In \cite{salim2020primal} the authors provide a duality result for the optimization problem in the space of probability measures corresponding to the proximal sampling method from \cite{durmus2019analysis}.

In \cite{liang2022proximal,chen2022improved,lee2021structured} the authors consider the regularized density $\tilde{\pi}(x,y)\propto\exp(-U(x) - \mu\|x-y\|^2)$ from which they sample using a Gibbs algorithm, where they sample alternately, once using a so-called \emph{restricted Gaussian oracle} to draw samples from $\tilde{\pi}(x|y)$ and once from the simple Gaussian $\tilde{\pi}(y|x)$. This framework is cast as Alternating Sampling Framework (ASF). Note that, while $\tilde{\pi}(x|y)$ corresponds to a regularized version of $\pi(x)$, the marginal $\tilde{\pi}(x) = \int \tilde{\pi}(x,y)\;\d y \propto \exp(-U(x)) \int \exp(- \mu\|x-y\|^2)\;\d y \propto \pi(x)$ is in fact equal to the target density since latter integral is independent of $x$. The restricted Gaussian oracle is implemented via a rejection sampling subroutine. In \cite{liang2022proximal} the authors focus on non-differentiable but Lipschitz continuous potentials. In \cite{lee2021structured} these techniques are applied to potentials of the form $U(x)=F(x)+G(x)$ with $F$ strongly convex and a Lipschitz continuous gradient $\nabla F$ but possibly non-smooth $G$. In \cite{chen2022improved} the convergence results from \cite{lee2021structured} are extended to the case of potentials which are not strongly convex. A downside of these alternating sampling methods is the requirement of subroutines for the computation of the restricted Gaussian oracle which might render iterations more expensive than in the case of Langevin based algorithms.

Another related line of works are so-called \emph{asymptotically exact data augmentation models} \cite{vono2021asymptotically,rendell2021global}. There, similarly to above, a joint distribution $\tilde{\pi}_\rho(x,y)$ is introduced, which depends on a parameter $\rho$. Instead of requiring the marginal distribution to be equal to the target, $\tilde{\pi}_\rho(x)=\pi(x)$, however, it is only required that $\tilde{\pi}_\rho(x)\rightarrow\pi(x)$ as $\rho\rightarrow0$ for all $x$ which implies convergence also in the total variation norm by Scheff\'{e}'s Lemma \cite{scheffe1947useful}. In this context, in \cite{vono2019split} the authors consider - in an ADMM fashion - for $\pi(x)\propto \exp(-F(x)-G(x))$ the approximation $\tilde{\pi}_\rho(x,y)=\exp(-F(x)-G(y)-\phi(x,y;\rho))$ with a suitable \emph{distance} function such that $\tilde{\pi}_\rho(x)\rightarrow\pi(x)$. Sampling is then again performed using a Gibbs sampler. This splitting approach could be interesting for the task considered in the present work in particular when also $F$ is non-smooth. Similarly,  in \cite{vono2022efficient}, the authors approximate distributions of the form $\pi(x)\propto \exp(-U(Kx))$ with a linear operator $K$ by $\tilde{\pi}_\rho(x,y)=\exp(-U(y)-\|Kx-y\|^2_2/(2\rho^2))$ such that a decoupling of operator and functional is achieved, for which Gibbs sampling is feasible. A downside of these approaches might be the necessity of a subsampling routine within the Gibbs sampler. As such a subroutine, in \cite{vono2019split} the authors propose to use MYULA \cite{durmus2022proximal} or P-MALA \cite{pereyra2016proximal}, whereas in \cite{vono2022efficient} rejection sampling is utilized.

In the present work we propose two methods for sampling from the potential $\pi(x)$ in \eqref{eq:intro:distribution}, a Proximal-subgradient (Prox-sub) as well as a Gradient-subgradient (Grad-sub) method. The methods employ only the subgradient of $G$, evaluation of $K$ and $K^*$ and the gradient, respectively proximal mapping, of $F$. Inspired by the results and techniques introduced in \cite{durmus2019analysis}, our main algorithm combines the benefits of different methods of \cite{durmus2019analysis} in the sense that it is applicable in the non-smooth case while simultaneously ensuring superior convergence in the smooth case. For a comparison of the complexities of the proposed methods to the state of the art we refer to \Cref{table:overview}, where we show the computational complexity to reach a given accuracy of the sample distribution to the target in the Wasserstein-2 distance, the Kullback-Leibler divergence (KL) and the TV. While in some cases the proposed methods exhibit a greater computational effort, its main advantage is the broad applicability to settings where for other algorithms a subroutine for the computation of proximal mappings is necessary. In summary the contributions of the present work are the following:
\begin{itemize}
\item We propose two Langevin algorithms for sampling from potentials of the form $U(x)=F(x)+G(Kx)$, with $G$ non-differentiable and the proximal mapping of $G \circ K$ not being explicit. To the best of our knowledge, this is the first Langevin-type sampling algorithm (without inner iterations) that is applicable to a potential of the above form, with $F$ not necessarily being Lipschitz continuous. In particular, it is the first algorithm that allows guaranteed sampling from models like the the $\TV-L^2$ model \cite{rudin1992tv_mh} or generalizations to inverse problems with $L^2$ data discrepancy \cite{holler20ip_review_mh}.
\item We provide non-asymptotic convergence results for the proposed algorithms for the case that $F$ is non-differentiable as well as improved results for the case that $F$ satisfies stronger regularity assumptions. The results are non-asymptotic in the sense that a desired accuracy is reached after a finite and explicitly known number of iterations.
\item We provide comprehensive numerical results confirming all proven convergence rates in a 2D setting. The practical feasibility of the methods for high-dimensional data is further demonstrated via examples from Bayesian imaging.
\end{itemize}
\begin{table}[h]
\resizebox{\textwidth}{!}{%
\centering
        \begin{tabular}{cccccccccc}
        \toprule 
        & \multicolumn{9}{c}{Assumptions on $F$}\\
& \multicolumn{2}{c}{Lipschitz continuous} & & \multicolumn{2}{c}{Lipschitz gradient} & & \multicolumn{3}{c}{\makecell{Lipschitz gradient,\\strongly convex}}\\
\cmidrule{2-3}\cmidrule{5-6}\cmidrule{8-10}
 & TV & KL &  & TV & KL & & TV & KL & $W^2_2$\\
\midrule
Prox-sub & $d\epsilon^{-6}$ & $d\epsilon^{-3}$ &  & $d\epsilon^{-4}$ & $d\epsilon^{-2}$ & & $d\epsilon^{-4}$ & $d\epsilon^{-2}$ & $d\frac{\log(\epsilon^{-1})}{\epsilon}$\\
Grad-sub & - & - &  & $d\epsilon^{-4}$ & $d\epsilon^{-2}$ & & $d\epsilon^{-4}$ & $d\epsilon^{-2}$ & $d\frac{\log(\epsilon^{-1})}{\epsilon}$\\
MYULA \cite{durmus2022proximal} & \textcolor{gray}{$d^5\frac{\log^2(\epsilon^{-1})}{\epsilon^2}$} & - &  & \textcolor{gray}{$d^5\frac{\log^2(\epsilon^{-1})}{\epsilon^2}$} & - & & \textcolor{gray}{$d\log(d)\frac{\log^2(\epsilon^{-1})}{\epsilon^2}$} & - & - \\
SSGLD \cite{durmus2019analysis} & $\epsilon^{-4}$ & $\epsilon^{-2}$ &  & - & - &  & - & - & - \\
SPGLD \cite{durmus2019analysis} & - & - &  & \textcolor{gray}{$d\epsilon^{-4}$} & \textcolor{gray}{$d\epsilon^{-2}$} &  & \textcolor{gray}{$d\epsilon^{-4}$} & \textcolor{gray}{$d\epsilon^{-2}$} & \textcolor{gray}{$d\epsilon^{-2}$} \\
ASF \cite{liang2022proximal} & \textcolor{gray}{$\frac{\log(d^{1/2}\epsilon^{-1})}{\epsilon}$} & - &  & - & - &  & - & - & - \\
\bottomrule
        \end{tabular}}
\vspace*{0.2cm}       
\caption{Computational complexities of the proposed methods and existing methods in terms of the accuracy $\epsilon$ and the dimension $d$. The entries depict the order of the necessary number of iterations to reach accuracy $\epsilon$ to the target density in a specific metric. Gray entries indicate cases where it is necessary to iteratively compute the proximal mapping of $G\circ K$, or even $F+G\circ K$, which is not explicit in general for non-trivial $K$. %
MYULA is initialized with the Dirac distribution at an arbitrary point. ASF \cite{liang2022proximal} is initialized at a minimizer of the potential $U$ regularized by a quadratic term. For the remaining methods we assume initialization using a warm start such that the Wasserstein-2 distance to the target density of the initial distribution is bounded by a constant. In all cases $G$ is only assumed to be convex and Lipschitz continous. Here, SSGLD stands for Stochastic SubGradient Langevin Dynamics and SPGLD stands for Stochastic Proximal Gradient Langevin Dynamics}
\label{table:overview}
\end{table}
A limitation of our work is that, so far, we were not able to establish ergodicity and existence and uniqueness of a stationary measure for the transition kernel corresponding to one iteration of our algorithm. This has the disadvantage that, for sampling, several parallel Markov chains are required instead of a single chain from which successive samples can be used.  Existing literature typically relies on differentiability of the potential $U$ in order to show that a stationary measure for the transition kernel exists and that the corresponding Markov chain is geometrically ergodic \cite{durmus2017nonasymptotic,durmus2019analysis,roberts1996exponential,
pereyra2016proximal,durmus2022proximal}. However, due to the non-differentiability of the considered potentials and the fact that we do not consider a smoothed approximation of the potential $U$, an application of these results is not feasible in our setting.

\paragraph{Outline of the paper}
The remainder of the paper is organized as follows. After introducing the notation and listing some essential preliminary results in \Cref{sec:preliminaries}, we present the proposed algorithms in \Cref{sec:prox_subgrad,sec:grad_subgrad}. For each method we provide corresponding non-asymptotic convergence results in the respective section. Numerical experiments and a brief conclusion are further provided in \Cref{sec:experiments,sec:conclusion}, respectively.

\section{Notation and Preliminaries}\label{sec:preliminaries}
Slightly abusing notation, we write $\|\;\cdot\;\|$ for both the $2$-norm on $\R^d$ and the norm of a linear operator. A proper function $f:\R^d\rightarrow (-\infty,\infty]$ is called $m$-strongly convex if for any $x,y\in\R^d$ and $\lambda\in(0,1)$ 
\[f(\lambda x +(1-\lambda)y)\leq \lambda f(x) + (1-\lambda)f(y) - m\frac{\lambda(1-\lambda)}{2}\|x-y\|^2.\]
For a convex function $f:\R^d\rightarrow (-\infty,\infty]$ we define the subdifferential as the (possibly empty) set
\[
\partial f(x) = \left\{y\in\R^d\;\middle|\; \forall h\in\R^d:\; f(x)+\langle y, x+h\rangle\leq f(x+h)\right\},
\]
where $\langle \cdot , \cdot \rangle$ denotes that standard inner product between vectors in $\R^d$.
For proper, convex, and lsc $f$, $\partial f(x)\neq\emptyset$ for any $x\in (\dom (f))^\circ$, the interior of $\dom (f)$, where $\dom (f)\coloneqq\left\{x\in\R^d\;\middle|\;f(x)<\infty\right\}$.
We further define the proximal mapping of $f$ as
\begin{equation}\label{eq:definition_prox}
{\prox}_{f}(x) = \argmin_{z\in\R^d} f(z) + \frac{1}{2} \|x-z\|^2.
\end{equation}
If $f$ is proper, convex, and lsc, ${\prox}_{f}(x)$ is well-defined and single-valued for any $x$, that is, the minimizer in \eqref{eq:definition_prox} exists and is unique. We denote the Borel $\sigma$-algebra on $\R^d$ as $\Bc(\R^d)$ and the set of probability measures on the measurable space $(\R^d,\Bc(\R^d))$ as $\Pc(\R^d)$. A \emph{Markov transition kernel} is a function $S:\R^d\times \Bc(\R^d)\rightarrow [0,1]$ such that $S(\;\cdot\;,A)$ is measurable for any $A$ and $S(x,\;\cdot\;)$ is a probability measure for any $x$. We denote the application of the kernel $S$ to a probability measure $\mu\in\Pc(\R^d)$ as $\mu S$ defined as $\mu S(A) = \int S(x,A)\;\d\mu(x)$ which is again a probability measure. If there exists a \emph{density} $g$ such that $S(x,A) = \int_A g(x,y)\;\d y$, Fubini's theorem implies that $y\mapsto \int g(x,y)\;\d \mu(x)$ is a density of $\mu S$ and, therefore, we can compute the integral of a function $f$ with respect to $\mu S$ as $\int f(y)\;\d \mu S(y) = \int \int f(y) g(x,y)\;\d\mu(x) \d y$. Let $\Pc_2(\R^d)$ denote the set of all probability measures with finite second moment, that is, all $\mu\in\Pc(\R^d)$ such that $\int \|x\|^2 \; \d \mu(x)<\infty$. We write $\mu\ll\nu$ if $\mu$ is absolutely continuous with respect to $\nu$ and in this case the Radon-Nikod\'ym derivative of $\mu$ with respect to $\nu$ is denoted as $\frac{\d\mu}{\d\nu}$. By $(B_k)_k$ we always denote an i.i.d. sequence of $d$-dimensional standard Gaussian random variables. The Kullback-Leibler (KL) divergence between two measures $\mu,\nu\in\Pc(\R^d)$ is defined as
\begin{equation}
\KL(\mu|\nu) = \begin{cases}
\int \frac{\d \mu}{\d \nu}(x) \log(\frac{\d \mu}{\d \nu}(x)) \;\d \nu(x)\quad &\text{if } \mu \ll \nu\\
\infty \quad &\text{otherwise.}
\end{cases}
\end{equation}
A coupling of $\mu$ and $\nu$ is a probability measure $\zeta$ on $\R^d\times\R^d$ with marginal distributions $\mu$ and $\nu$, that is, for any $A\in\Bc(\R^d)$, $\zeta(A\times\R^d)=\mu(A)$ and $\zeta(\R^d\times A)=\nu(A)$. We denote the set of all couplings of $\mu$ and $\nu$ as $\Pi(\mu,\nu)$. In a slight abuse of terminology, we will also refer to a couple of random variables $(X,Y)$ as a coupling of $\mu$ and $\nu$ if there exists $\zeta\in\Pi(\mu,\nu)$ such that $(X,Y)$ is distributed according to $\zeta$. The (Kantorovich) Wasserstein 2-distance between two measures $\mu,\nu\in\Pc_2(\R^d)$ is defined as
\begin{equation}
W_2(\mu,\nu) = \left(\inf\limits_{\zeta\in\Pi(\mu,\nu)} \int \|x-y\|^2\;\d \zeta(x,y)\right)^{1/2}.
\end{equation}
By \cite{villani2009optimal} the infimum in the definition of the Wasserstein 2-distance is, in fact, attained, i.e., there exists a coupling $\zeta^*$ such that $W^2_2(\mu,\nu) = \int \|x-y\|^2\;\d \zeta^*(x,y) = \E[\|X-Y\|^2]$ for $(X,Y)\sim \zeta^*$. We will refer to $\zeta^*$, respectively $(X,Y)$, as an optimal coupling for $\mu,\nu$.

For a potential $V:\R^d\rightarrow [0,\infty)$ we define the potential energy and the Boltzman H-functional $\Ec_V,\Hc:\Pc_2(\R^d)\rightarrow [0,\infty]$ as
\begin{equation}
\begin{aligned}
\Ec_V(\mu) &= \int V(x)\; \d \mu(x),\quad\text{and}\quad
\Hc(\mu) &= \KL(\mu|\Leb),
\end{aligned}
\end{equation}
respectively, where Leb denotes the Lebesgue measure. Further, for the given potential $U$ from \eqref{eq:intro:distribution} we define the free energy functional $\Fc = \Hc + \Ec_U = \Hc + \Ec_F + \Ec_{G\circ K}$. The relevance of the free energy functional is evident from the following result.
\begin{lemma}\label{lem:F_diff_KL}
It holds that $\pi\in\Pc_2(\R^d)$, $\Ec_U(\pi)<\infty$, and $\Hc(\pi)<\infty$.
Moreover, $\mu\in\Pc_2(\R^d)$ with $\Ec_U(\mu)<\infty$ satisfies $\Fc(\mu)-\Fc(\pi)=\KL(\mu|\pi)$.
\end{lemma}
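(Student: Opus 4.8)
The plan is to verify the three finiteness claims about $\pi$ first, and then derive the free-energy identity by a direct computation with the Radon--Nikod\'ym derivative. Throughout I will write $Z = \int e^{-U(y)}\,\d y$ for the normalizing constant, so $\pi(x) = e^{-U(x)}/Z$ and in particular $\pi \ll \Leb$ with $\frac{\d\pi}{\d\Leb}(x) = e^{-U(x)}/Z$.

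\textbf{Finiteness of the moments and entropies of $\pi$.} Since $F,G\ge 0$, we have $U = F + G\circ K \ge 0$, hence $e^{-U}\le 1$; but this alone does not give integrability, so I would use the regularity in \Cref{ass:general}. If $F$ satisfies \ref{ass:Fa} (Lipschitz), then $F(x)\le F(0)+L_F\|x\|$, and similarly $G(Kx)\le G(0)+L_G\|K\|\,\|x\|$, so $U(x)\le C_1 + C_2\|x\|$ for constants $C_1,C_2$; this is not enough for integrability of $e^{-U}$ either. The cleaner route: either the paper has a standing assumption elsewhere that $Z<\infty$ and $\pi$ has finite second moment (this is implicit in calling $\pi$ a probability density on $\R^d$), or one argues that $U$ is coercive. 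I would state that $Z \in (0,\infty)$ is part of the model assumption (it is asserted in \eqref{eq:intro:distribution}), and that finiteness of the second moment follows once we know $U$ grows at least logarithmically; more carefully, I expect the paper assumes $U$ is coercive (or at least that $\int\|x\|^2 e^{-U(x)}\,\d x<\infty$), from which $\pi\in\Pc_2(\R^d)$ is immediate. Given $\pi\in\Pc_2(\R^d)$, finiteness of $\Ec_U(\pi) = \int U\,\d\pi$ follows from the linear-growth bound $U(x)\le C_1+C_2\|x\|$ together with $\int\|x\|\,\d\pi(x)\le (\int\|x\|^2\,\d\pi(x))^{1/2}<\infty$. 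Finally $\Hc(\pi) = \KL(\pi|\Leb) = \int \log\!\big(e^{-U(x)}/Z\big)\,\d\pi(x) = -\Ec_U(\pi) - \log Z$, which is finite because $\Ec_U(\pi)$ is finite and $Z\in(0,\infty)$.

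\textbf{The free-energy identity.} Let $\mu\in\Pc_2(\R^d)$ with $\Ec_U(\mu)<\infty$. I would first observe that if $\mu\not\ll\Leb$ then $\Hc(\mu)=\KL(\mu|\Leb)=+\infty$, and correspondingly $\mu\not\ll\pi$ (since $\pi$ and $\Leb$ have the same null sets, as $\frac{\d\pi}{\d\Leb}>0$ everywhere), so $\KL(\mu|\pi)=+\infty$ too, and $\Fc(\mu)=+\infty$; thus the identity $\Fc(\mu)-\Fc(\pi)=\KL(\mu|\pi)$ holds in $[0,\infty]$ trivially (both sides $+\infty$, using that $\Fc(\pi)<\infty$ from the first part). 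So assume $\mu\ll\Leb$ with density $p=\frac{\d\mu}{\d\Leb}$. Then $\mu\ll\pi$ with $\frac{\d\mu}{\d\pi}(x) = p(x)\big/\big(e^{-U(x)}/Z\big) = Z\,p(x)\,e^{U(x)}$, and
\begin{equation*}
\KL(\mu|\pi) = \int \log\!\Big(\frac{\d\mu}{\d\pi}\Big)\,\d\mu = \int \big(\log p(x) + U(x) + \log Z\big)\,\d\mu(x) = \Hc(\mu) + \Ec_U(\mu) + \log Z.
\end{equation*}
Since $\Fc(\mu)=\Hc(\mu)+\Ec_U(\mu)$ and $\Fc(\pi)=\Hc(\pi)+\Ec_U(\pi) = -\log Z$ by the computation above, we get $\KL(\mu|\pi) = \Fc(\mu) + \log Z = \Fc(\mu) - \Fc(\pi)$, as claimed.

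\textbf{Main obstacle.} The only genuinely delicate point is justifying that all the integrals appearing are well-defined and that the splitting $\int(\log p + U + \log Z)\,\d\mu = \Hc(\mu)+\Ec_U(\mu)+\log Z$ is legitimate, i.e.\ that $\Hc(\mu)=\int \log p\,\d\mu$ is not an indeterminate $\infty-\infty$. The standard fix is: the negative part of $\log p$ is controlled because $\int (\log p)_-\,\d\mu < \infty$ whenever $\mu$ has finite second moment (comparing $\mu$ against a Gaussian reference measure via $\KL(\mu|\gamma)\ge 0$, which bounds $\int(\log p)_-\,\d\mu$ by a constant plus a multiple of the second moment of $\mu$); hence $\Hc(\mu)\in(-\infty,\infty]$ is unambiguously defined, and the additivity of the integral is valid because $U\ge 0$ and the negative part of the integrand is integrable. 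I would include this as a short lemma-style remark (or cite it, e.g.\ from the Bakry--Gentil--Ledoux book or \cite{durmus2019analysis}), since it is exactly the standard argument that makes the ``free energy = relative entropy up to a constant'' identity rigorous. Everything else is bookkeeping with $Z$.
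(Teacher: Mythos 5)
The paper does not prove this lemma itself; it cites \cite[Lemma 1]{durmus2019analysis}, and your computation of the identity $\Fc(\mu)-\Fc(\pi)=\KL(\mu|\pi)$ is exactly the argument given there: write $\frac{\d\mu}{\d\pi}=Z\,p\,e^{U}$, split the logarithm, use $\Fc(\pi)=-\log Z$, and handle the $\infty-\infty$ issue in $\Hc(\mu)$ by noting that $\int(\log p)_-\,\d\mu<\infty$ for $\mu\in\Pc_2(\R^d)$ via comparison with a Gaussian. That part of your proposal is correct and complete.

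The genuine gap is in the first part, where you punt: you write that you ``expect the paper assumes $U$ is coercive (or at least that $\int\|x\|^2e^{-U}\,\d x<\infty$).'' It does not — the only implicit assumption is that $Z=\int e^{-U}\,\d y\in(0,\infty)$, i.e.\ that $\pi$ is a probability measure. The missing ingredient is the standard fact that a proper \emph{convex} function $U$ with $\int e^{-U}\,\d y<\infty$ automatically satisfies a linear lower bound $U(x)\ge a\|x\|-b$ for some $a>0$, $b\in\R$ (if $U$ grew sublinearly along some ray, convexity would force $e^{-U}$ to be non-integrable on a cone around that ray). This is precisely how \cite{durmus2019analysis} proves the three finiteness claims: the bound gives $\int\|x\|^2e^{-U}\,\d x<\infty$, hence $\pi\in\Pc_2(\R^d)$; combined with $0\le U$ and $Ue^{-U}\le Ce^{-U/2}\le Ce^{b/2}e^{-a\|x\|/2}$ it gives $\Ec_U(\pi)<\infty$; and then $\Hc(\pi)=-\Ec_U(\pi)-\log Z$ is finite, as you computed. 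Without this coercivity step your proof of the first sentence of the lemma rests on an assumption the paper does not make; with it, your argument is complete and coincides with the cited one. (Your linear \emph{upper} bounds on $F$ and $G\circ K$ from \Cref{ass:general} are correct and are indeed what you need, together with $\pi\in\Pc_2(\R^d)$, to conclude $\Ec_U(\pi)<\infty$ by Cauchy--Schwarz — that alternative route to the second claim is fine once the first is established.)
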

For a proof see \cite[Lemma 1]{durmus2019analysis}. This result implies that $\Fc(\mu)-\Fc(\pi)\geq 0$ with equality if and only if $\mu=\pi$. It further allows us to formulate the task of sampling as a minimization problem. Namely, if $(X_k)_k$ denotes the MC used to approximate the distribution $\pi$, and $\mu_k$ the distribution of $X_k$, then convergence of the MC is ensured if $\mu_k$ minimizes $\mu\mapsto \Fc(\mu)-\Fc(\pi)$ in the limit.

\section{Proximal-subgradient Langevin Sampling}\label{sec:prox_subgrad}
In this section we investigate the first of the proposed sampling methods, namely the Proximal-subgradient Langevin algorithm (Prox-sub) which is depicted in \Cref{algo:prox_subgrad}. One iteration of the method consists of a subgradient step with respect to $G\circ K$ and a proximal step with respect to $F$ followed by adding the Gaussian random variable $B_{k+1}$. In particular, the method is feasible in the case that both $F$ and $G$ are non-differentiable. At this point it might seem unusual to apply different step sizes $\tau_k$ and $\tau_{k+1}$ within a single iteration of \Cref{algo:prox_subgrad}. In the way the algorithm is presented, the step size of proximal step with respect to $F$ in iteration $k$ is identical to the step size of the subgradient update for $G\circ K$ in iteration $k+1$ which will turn out to be favorable within the proofs of convergence, see \Cref{remark:step_sizes}.

\begin{algorithm}
\setstretch{1.15}
\caption{Proximal-subgradient Langevin Algorithm (Prox-sub)}\label{algo:prox_subgrad}
\textbf{Input:} Initialization $X_0$, number of iterations $K$, step sizes $(\tau_k)_{k=0}^K$, $\tau_k>0$.\\ \vspace*{-\baselineskip}
\begin{algorithmic}[1]
\FOR{$k=0,1,2,\dots,K-1$}
\STATE Pick $Y_{k+1} \in \partial G(KX_k)$
\STATE $X_{k+1} = {\prox}_{\tau_{k+1} F}(X_k - \tau_{k} K^* Y_{k+1}) + \sqrt{2\tau_{k+1}}B_{k+1}$
\ENDFOR
\end{algorithmic}
\textbf{Output:} $X_K$.
\end{algorithm}
Let $\mu_k$ denote the distribution of $X_k$. In accordance with the algorithm, we define the following Markov transition kernels
\begin{equation}
\begin{cases}
S^G_{\tau}(x,A) = \delta_{x - \tau K^*\theta(Kx)}(A)\\
S^F_{\tau}(x,A) = \delta_{{\prox}_{\tau F}(x)}(A)\\
T_{\tau}(x,A) = (4\pi\tau)^{-d/2} \int\limits_A \exp\left(-\left\| x - z \right\|^2\middle/(4\tau)\right) \; \d z
\end{cases}
\end{equation}
where $\delta_x\in\Pc_2(\R^d)$ denotes the Dirac measure concentrated in $x$ and $\theta:\R^d\rightarrow \R^d$ is a measurable function such that for any $z\in\R^{d'}$, $\theta(z)\in\partial G(z)$.
We introduce the notation $R_{\tilde{\tau},\tau} \coloneqq S^G_{\tilde{\tau}}S^F_{\tau} T_{\tau}$ so that the iteration on the distributions $(\mu_k)_k$ induced by \Cref{algo:prox_subgrad} can be written as $\mu_{k+1}=\mu_k R_{\tau_k,\tau_{k+1}}$. Note that $R_{\tilde{\tau},\tau} \coloneqq S^G_{\tilde{\tau}}S^F_{\tau} T_{\tau}$ (and similar for other transition kernels) is purely symbolic notation that only makes sense in conjunction with the application to a probability measure $\mu$, where for the latter we use the order $\mu R_{\tilde{\tau},\tau} = ((\mu S^G_{\tilde{\tau}})S^F_{\tau}) T_{\tau}$. 

\subsection{Convergence in a General Setting}
The main results of this section will be \Cref{thm:conv_estimate_general,cor:prox_sub_conv_weak} where we show convergence of the running mean of the sample distributions in KL to the target density.
As mentioned above, the existence and uniqueness of a stationary measure for the kernel $R_{\tilde{\tau},\tau}$ is still an open question. Instead of proving ergodicity of the Markov chain generated by $R_{\tilde{\tau},\tau}$, thus, the strategy for proving convergence of the proposed algorithm is to directly bound the discrepancy between $\mu_k$ and $\pi$ in a suitable metric. To achieve this, we closely follow \cite{durmus2019analysis}, that is, we will derive a bound for $\Fc(\mu_k)-\Fc(\pi)=\KL(\mu_k|\pi)$ along the iteration $\mu_{k+1}=\mu_k R_{\tau_k,\tau_{k+1}}$. In order to do so we will make use of the decomposition
\begin{equation}
\begin{aligned}
\Fc(\mu R_{\tilde{\tau},\tau})-\Fc(\pi)
= \Ec_F(\mu S^G_{\tilde{\tau}}S^F_{\tau}T_{\tau})-\Ec_F(\mu S^G_{\tilde{\tau}}S^F_{\tau})
+\Ec_F(\mu S^G_{\tilde{\tau}}S^F_{\tau}) -\Ec_F(\pi) \\
+\Ec_{G\circ K}(\mu S^G_{\tilde{\tau}}S^F_{\tau}T_{\tau}) - \Ec_{G\circ K}(\pi)
+\Hc(\mu S^G_{\tilde{\tau}}S^F_{\tau}T_{\tau}) - \Hc(\pi)
\end{aligned}
\end{equation}
where we will derive estimates for each difference separately.
\begin{lemmaE}[][end]
\label{lem:operator_T}
Let $\mu\in\Pc_2(\R^d)$. If $F$ is $L_F$-Lipschitz continuous it holds true that $\Ec_F(\mu T_{\tau}) - \Ec_F(\mu) \leq L_F\sqrt{2d\tau}$, and if $F$ is differentiable with $L_{\nabla F}$-Lipschitz gradient, then $\Ec_F(\mu T_{\tau}) - \Ec_F(\mu) \leq \tau L_{\nabla F} d$.
\end{lemmaE}
\begin{proofE}
If $F$ is Lipschitz continuous a simple computation yields
\begin{equation}
\begin{aligned}
\Ec_F(\mu T_\gamma) - \Ec_F(\mu)=\int F(x) \;\d(\mu T_{\tau})(x) - \int F(x) \;\d\mu(x)\\
= (4\pi\tau)^{-d/2} \int\int (F(x+y)-F(x)) e^\frac{-\|y\|^2}{4\tau} \;\d \mu(x)\d y\\
\leq (4\pi\tau)^{-d/2} \int\int L_F \|y\| e^\frac{-\|y\|^2}{4\tau} \;\d \mu(x)\d y \\\leq \left((4\pi\tau)^{-d/2} \int L_F^2 \|y\|^2 e^\frac{-\|y\|^2}{4\tau} \;\d y\right)^{\frac{1}{2}}= L_F\sqrt{d 2 \tau}.
\end{aligned}
\end{equation}
If $F$ is differentiable with Lipschitz gradient the fundamental theorem of calculus leads to
\begin{equation}
\begin{aligned}
F(y)-F(x) - \langle \nabla F(x), y-x\rangle 
 &= \int\limits_0^1 \langle \nabla F(x+t(y-x)) - \nabla F(x), y-x\rangle \;\d t\\ 
 & \leq  \int\limits_0^1  L_{\nabla F}t\|y-x\|^2 \;\d t\leq \frac{L_{\nabla F}}{2}\|y-x\|^2.
\end{aligned}
\end{equation}
Therefore we can estimate
\begin{equation}
\begin{aligned}
\Ec_F(\mu T_\gamma) - \Ec_F(\mu)= (4\pi\tau)^{-d/2} \int\int (F(x+y)-F(x)) e^\frac{-\|y\|^2}{4\tau} \;\d \mu(x)\d y\\
\leq (4\pi\tau)^{-d/2} \int\int \left\{ \frac{L_{\nabla F}}{2} \|y\|^2 + \langle \nabla F(x), y \rangle \right\} e^\frac{-\|y\|^2}{4\tau} \;\d \mu(x)\d y = \tau L_{\nabla F} d
\end{aligned}
\end{equation}
where in the last equality we used the fact that for any $x\in\R^d$ $\int \langle \nabla F(x), y \rangle e^\frac{-\|y\|^2}{4\tau} \;\d y=0$.
\end{proofE}
\begin{lemmaE}{(\cite[Lemma 29]{durmus2019analysis})}\label{lem:SF_general}
For any $\mu,\nu\in\Pc_2(\R^d)$ it holds $2\tau \left\{\Ec_F(\mu S^F_\tau) - \Ec_F(\nu)\right\}
\leq W^2_2(\mu,\nu) - W^2_2(\mu S_\tau^F, \nu)$.
\end{lemmaE}
\begin{proofE}
We denote $z={\prox}_{\tau F}(x)$. The definition of the proximal mapping implies
\[0\in\tau\partial F(z) + z-x.\]
Thus, $\tau^{-1}(x-z)\in\partial F(z)$ implying for any $w\in\R^d$
\begin{equation}
\begin{aligned}
2\tau\left\{ F(z)-F(w)\right\} \leq 2\langle x-z, z-w\rangle \leq \|x-w\|^2-\|z-w\|^2
\end{aligned}
\end{equation}
where for the last inequality we used the identity $\|x-w\|^2 = \|x-z\|^2 + 2\langle x-z,z-w\rangle + \|z-w\|^2$. Plugging in an optimal coupling $(X,W)$ for the distributions $(\mu,\nu)$ and taking the expectation yields the desired result, since $W^2_2(\mu S_\tau^F, \nu)\leq\E[\|Z-W\|^2]$ for any coupling $(Z,W)$ of $(\mu S_\tau^F, \nu)$.
\end{proofE}
\begin{lemmaE}{(\cite[Lemma 26]{durmus2019analysis})}
\label{lem:subgrad}
Let \Cref{ass:general}, \ref{ass:G} hold. Then for any $\mu,\nu\in\Pc_2(\R^d)$
\begin{equation}
\begin{aligned}
2\tau \left\{ \Ec_{G\circ K}(\mu) - \Ec_{G\circ K}(\nu) \right\}\leq W^2_2(\mu,\nu) -W^2_2(\mu S^G_{\tau},\nu) + \tau^2 L_G^2\|K\|^2.
\end{aligned}
\end{equation}
\end{lemmaE}
\begin{proofE}
Let $z = x - \tau K^*\theta(x)$. Then $\tau^{-1}(x-z)\in\partial (G\circ K) (x)$ and, thus,
\[G(Kx) + \langle \tau^{-1}(x-z), w-x\rangle \leq G(Kw)\]
for any $w$. Together with the fact that $\|z-w\|^2 = \|z-x\|^2 + 2\langle z-x,x-w\rangle + \|w-x\|^2$ we obtain
\begin{equation}
\begin{aligned}
2\tau (G(Kx) - G(Kw))\leq 2\langle x-z, x-w\rangle =  \|z-x\|^2 + \|w-x\|^2 - \|z-w\|^2\\ =
\|w-x\|^2 - \|z-w\|^2 + \tau^2 \|K^*\theta(x)\|^2\\ \leq
\|w-x\|^2 - \|z-w\|^2 + \tau^2 L_G^2\|K\|^2\\
\end{aligned}
\end{equation}
where for the last inequality we use the fact that elements of $\partial G$ are bounded by $L_G$ due to Lipschitz continuity of $G$. Plugging in an optimal coupling $(X,W)$ of the distributions $\mu$ and $\nu$ and taking the expectation yields
\begin{equation}
\begin{aligned}
2\tau \left\{ \Ec_{G\circ K}(\mu) - \Ec_{G\circ K}(\nu) \right\}\leq W^2_2(\mu,\nu) -W^2_2(\mu S^G_{\tau},\nu) + \tau^2 L_G^2\|K\|^2
\end{aligned}
\end{equation}
where we used that $W^2_2(\mu S^G_{\tau},\nu)\leq \E[\|Z-W\|^2]$.
\end{proofE}

\begin{lemma}\label{lem:H_inequality}
Let $\mu,\nu\in\Pc_2(\R^d)$, $\Hc(\nu)<\infty$, then $2\tau\left\{ \Hc(\mu T_\tau)-\Hc(\nu)\right\}\leq W_2^2(\mu,\nu)-W_2^2(\mu T_\tau,\nu)$.
\end{lemma}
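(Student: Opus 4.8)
The plan is to read the inequality as a time‑integrated version of the Evolution Variational Inequality (EVI) satisfied by the Wasserstein gradient flow of the entropy $\Hc$, combined with the monotone decay of $\Hc$ along that flow. First I would set $\rho_t \coloneqq \mu T_t$ for $t\ge 0$, so that $\rho_t$ is the convolution of $\mu$ with the centered Gaussian of covariance $2t\,\mathrm{Id}$; equivalently, $t\mapsto\rho_t$ is the weak solution of the heat equation $\partial_t\rho_t=\Delta\rho_t$ with $\rho_0=\mu$, and it is a locally absolutely continuous curve in $(\Pc_2(\R^d),W_2)$. Since for $t>0$ the density of $\rho_t$ is bounded (by $(4\pi t)^{-d/2}$) and $\rho_t$ has finite second moment, $\Hc(\rho_t)=\KL(\rho_t|\Leb)$ is finite for every $t>0$; in particular $\Hc(\mu T_\tau)<\infty$, so both sides of the asserted inequality are finite real numbers.

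Next I would invoke two classical facts. (i) The curve $(\rho_t)_{t\ge 0}$ is the gradient flow of $\Hc$ in $(\Pc_2(\R^d),W_2)$, and since $\Hc$ is geodesically (displacement) convex with modulus $0$, it satisfies $\mathrm{EVI}_0$: for a.e.\ $t>0$ and every $\nu\in\Pc_2(\R^d)$ with $\Hc(\nu)<\infty$,
\[
\tfrac12\,\tfrac{\d}{\d t}\,W_2^2(\rho_t,\nu)\ \le\ \Hc(\nu)-\Hc(\rho_t).
\]
(ii) The map $t\mapsto\Hc(\rho_t)$ is non-increasing (de Bruijn's identity, $\tfrac{\d}{\d t}\Hc(\rho_t)=-I(\rho_t)\le 0$ with $I$ the Fisher information), so $\Hc(\rho_t)\ge\Hc(\rho_\tau)$ for $t\in[0,\tau]$. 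Both are standard in the theory of gradient flows in Wasserstein space; see, e.g., \cite{ambrosio2008gradient}. Integrating the EVI over $[s,\tau]$ for $0<s<\tau$, using $\Hc(\nu)-\Hc(\rho_t)\le\Hc(\nu)-\Hc(\rho_\tau)$ on the right‑hand side, and then letting $s\downarrow 0$ — which is legitimate because $t\mapsto\rho_t$ is $W_2$-continuous at $0$, so $W_2^2(\rho_s,\nu)\to W_2^2(\mu,\nu)$ — gives
\[
\tfrac12\big(W_2^2(\mu T_\tau,\nu)-W_2^2(\mu,\nu)\big)\ \le\ \tau\big(\Hc(\nu)-\Hc(\mu T_\tau)\big),
\]
which after multiplying by $2$ and rearranging is exactly the claim. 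Starting the integration at $s>0$ rather than $s=0$ also circumvents the case $\Hc(\mu)=+\infty$, which is not excluded by the hypotheses (and in which the inequality remains a finite‑versus‑finite statement).

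The only non-elementary ingredient — and hence the part I expect to be the main obstacle — is the rigorous justification of $\mathrm{EVI}_0$ together with the entropy dissipation: namely that the heat semigroup $(T_t)_t$ really is the $W_2$-gradient flow of $\Hc$, that $\Hc$ is displacement convex, and that $t\mapsto\rho_t$ is absolutely continuous in $W_2$ so that $t\mapsto W_2^2(\rho_t,\nu)$ is absolutely continuous and the pointwise inequality can be integrated. These are precisely the statements provided by the gradient‑flow literature \cite{ambrosio2008gradient}. Alternatively, one can give a more hands‑on derivation: differentiate $t\mapsto\tfrac12 W_2^2(\rho_t,\nu)$ along the continuity equation $\partial_t\rho_t+\dive(\rho_t v_t)=0$ with velocity field $v_t=-\nabla\log\rho_t$, and bound the resulting expression by $\Hc(\nu)-\Hc(\rho_t)$ using Brenier's theorem and the first‑order characterization of McCann's displacement convexity of $\Hc$. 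Either way, once the EVI is in hand, the remaining two steps are the elementary integration described above.
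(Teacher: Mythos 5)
Your argument is correct and follows essentially the same route as the proof the paper defers to (\cite[Lemma 5]{durmus2019analysis}): there, too, the inequality is obtained from the $\mathrm{EVI}_0$ satisfied by the heat semigroup as the $W_2$-gradient flow of $\Hc$, combined with the monotonicity of $t\mapsto\Hc(\mu T_t)$, integrated in time. Your additional care with the case $\Hc(\mu)=+\infty$ (starting the integration at $s>0$ and letting $s\downarrow 0$) is a sound way to handle the only point the hypotheses leave open.
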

For a proof see \cite[Lemma 5]{durmus2019analysis}.

We are now in the position to collect the results in order to obtain a bound on the free energy functional.
\begin{proposition}\label{lem:KL_ineq_general}
Let \Cref{ass:general} be satisfied. Then for any $\mu \in \Pc_2(\R^d)$
\begin{equation}\label{eq:adding_prox_subgrad}
\begin{aligned}
2\tau\left\{ \Fc(\mu R_{\tilde{\tau},\tau})-\Fc(\pi)\right\}\leq
W^2_2(\mu S^G_{\tilde{\tau}},\pi) - W^2_2(\mu R_{\tilde{\tau},\tau}S^G_{\tau},\pi) + 2\tau \phi(\tau) + \tau^2 L_G^2\|K\|^2
\end{aligned}
\end{equation}
where $\phi(\tau)=L_F\sqrt{2d\tau}$ or $\phi(\tau)=\tau L_{\nabla F} d$ if \Cref{ass:general}, \ref{ass:F}, \ref{ass:Fa} or \ref{ass:Fb} is satisfied, respectively.
\end{proposition}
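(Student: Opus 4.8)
The plan is to take the four-term decomposition of $\Fc(\mu R_{\tilde{\tau},\tau})-\Fc(\pi)$ displayed just above the auxiliary lemmas, multiply it by $2\tau$, and estimate each of the four summands with exactly one of \Cref{lem:operator_T,lem:SF_general,lem:subgrad,lem:H_inequality}, choosing the pair of measures fed into each lemma so that the resulting Wasserstein terms collapse into a telescoping sum.

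First I would check that every intermediate measure appearing in the decomposition lies in $\Pc_2(\R^d)$: starting from $\mu\in\Pc_2(\R^d)$, the kernel $S^G_{\tilde{\tau}}$ only shifts mass by the vector $\tilde{\tau}K^*\theta(K\cdot)$, which is bounded by $\tilde{\tau}L_G\|K\|$ since $G$ is $L_G$-Lipschitz; $S^F_{\tau}$ acts by the $1$-Lipschitz map ${\prox}_{\tau F}$; and $T_{\tau}$ is convolution with a Gaussian. Hence $\mu S^G_{\tilde{\tau}}$, $\mu S^G_{\tilde{\tau}}S^F_{\tau}$, and $\mu R_{\tilde{\tau},\tau}$ are all in $\Pc_2(\R^d)$; moreover $\Ec_F$ and $\Ec_{G\circ K}$ are finite on $\Pc_2(\R^d)$ under \Cref{ass:general} (at most quadratic growth of $F$, linear growth of $G$), $\Hc(\mu R_{\tilde{\tau},\tau})<\infty$ because the last applied kernel is a Gaussian smoothing, and $\pi\in\Pc_2(\R^d)$ with $\Hc(\pi)<\infty$ by \Cref{lem:F_diff_KL}. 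This makes the additive decomposition legitimate.

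Then, after multiplying by $2\tau$, I would bound: the term $\Ec_F(\mu S^G_{\tilde{\tau}}S^F_{\tau}T_{\tau})-\Ec_F(\mu S^G_{\tilde{\tau}}S^F_{\tau})$ via \Cref{lem:operator_T} applied to $\mu S^G_{\tilde{\tau}}S^F_{\tau}$, which gives the bound $2\tau\phi(\tau)$ with $\phi$ as in the statement; the term $\Ec_F(\mu S^G_{\tilde{\tau}}S^F_{\tau})-\Ec_F(\pi)$ via \Cref{lem:SF_general} with the pair $(\mu S^G_{\tilde{\tau}},\pi)$, giving $W^2_2(\mu S^G_{\tilde{\tau}},\pi)-W^2_2(\mu S^G_{\tilde{\tau}}S^F_{\tau},\pi)$; the term $\Ec_{G\circ K}(\mu R_{\tilde{\tau},\tau})-\Ec_{G\circ K}(\pi)$ via \Cref{lem:subgrad} with the pair $(\mu R_{\tilde{\tau},\tau},\pi)$, giving $W^2_2(\mu R_{\tilde{\tau},\tau},\pi)-W^2_2(\mu R_{\tilde{\tau},\tau}S^G_{\tau},\pi)+\tau^2L_G^2\|K\|^2$; and the term $\Hc(\mu R_{\tilde{\tau},\tau})-\Hc(\pi)$ via \Cref{lem:H_inequality} with the pair $(\mu S^G_{\tilde{\tau}}S^F_{\tau},\pi)$, giving $W^2_2(\mu S^G_{\tilde{\tau}}S^F_{\tau},\pi)-W^2_2(\mu R_{\tilde{\tau},\tau},\pi)$. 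Summing the four bounds, the quantities $W^2_2(\mu S^G_{\tilde{\tau}}S^F_{\tau},\pi)$ and $W^2_2(\mu R_{\tilde{\tau},\tau},\pi)$ each appear once with a plus and once with a minus sign and cancel, leaving precisely $2\tau\phi(\tau)+W^2_2(\mu S^G_{\tilde{\tau}},\pi)-W^2_2(\mu R_{\tilde{\tau},\tau}S^G_{\tau},\pi)+\tau^2L_G^2\|K\|^2$, which is \eqref{eq:adding_prox_subgrad}.

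I do not expect a genuine obstacle here, since the substantive estimates are already contained in the four lemmas and what remains is bookkeeping. The only point that needs care is feeding each lemma the correct pair of measures so that the Wasserstein terms line up into a telescoping sum — in particular, applying the subgradient estimate \Cref{lem:subgrad} at the already-updated measure $\mu R_{\tilde{\tau},\tau}$ rather than at $\mu$, which is exactly what produces the trailing $S^G_{\tau}$ on the left-hand side of \eqref{eq:adding_prox_subgrad} and is the reason for the shifted step-size indexing discussed in \Cref{remark:step_sizes} — together with verifying the finiteness statements above so that splitting $\Fc$ into a sum of energy differences is valid.
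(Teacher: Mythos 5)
Your proposal is correct and matches the paper's proof: the same four-term decomposition is estimated by the same four lemmas applied to exactly the same pairs of measures, and the Wasserstein terms telescope identically. The additional finiteness and $\Pc_2$-membership checks you include are sound but not spelled out in the paper.
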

\begin{proof}
From \Cref{lem:operator_T,lem:SF_general,lem:subgrad,lem:H_inequality} we obtain the following inequalities
\begin{equation}
\label{eq:pd_ex}
\begin{aligned}
2\tau\left\{ \Ec_F(\mu R_{\tilde{\tau},\tau}) - \Ec_F(\mu S^G_{\tilde{\tau}} S^F_\tau)\right\}&\leq 2\tau \phi(\tau),\\
2\tau \left\{\Ec_F(\mu S^G_{\tilde{\tau}} S^F_\tau) - \Ec(\pi)\right\}
&\leq W^2_2(\mu S^G_{\tilde{\tau}},\pi) - W^2_2(\mu S^G_{\tilde{\tau}} S^F_\tau, \pi),\\
2\tau \left\{ \Ec_{G\circ K}(\mu R_{\tilde{\tau},\tau}) - \Ec_{G\circ K}(\pi) \right\}&\leq W^2_2(\mu R_{\tilde{\tau},\tau},\pi) - W^2_2(\mu R_{\tilde{\tau},\tau}S^G_{\tau},\pi) + \tau^2 L_G^2\|K\|^2,\\
2\tau\left\{ \Hc(\mu R_{\tilde{\tau},\tau})-\Hc(\pi)\right\}&\leq W_2^2(\mu S^G_{\tilde{\tau}} S^F_\tau,\pi)-W_2^2(\mu R_{\tilde{\tau},\tau},\pi).
\end{aligned}
\end{equation}
Adding those yields the desired result.
\end{proof}
Unfortunately, convergence cannot be proven directly for the sequence of distributions $\mu_k$ generated from \Cref{algo:prox_subgrad}, but only for a convex combination of these distributions. We define for $\lambda_k>0$, $\tau_k>0$, a burn in time $N\in\N$, and an initial distribution $\mu\in\Pc_2(\R^d)$
\begin{equation}\label{eq:running_average}
\Lambda_{N,N+n} = \sum\limits_{k=N+1}^{N+n}\lambda_k,\quad Q^k_{\tau} = R_{\tau_1,\tau_2}R_{\tau_2,\tau_3}\dots R_{\tau_k, \tau_{k+1}},\quad \nu_n^N = \Lambda_{N,N+n}^{-1}\sum\limits_{k=N+1}^{N+n}\lambda_k \mu Q^k_{\tau}.
\end{equation}
with the convention that $\mu Q^0_{\tau}=\mu$.
\begin{theorem}\label{thm:conv_estimate_general}
Select the stepsizes $(\tau_k)_k$ and weights $(\lambda_k)_k$ such that, for all $k$, $\frac{\lambda_{k+1}}{\tau_{k+2}} \leq \frac{\lambda_k}{\tau_{k+1}}$, and let \cref{ass:general} hold. Then
\begin{equation}
\begin{aligned}
\KL(\nu_n^N | \pi)\leq \Lambda_{N,N+n}^{-1} \frac{\lambda_{N+1}}{2\tau_{N+2}} W^2_2(\mu Q^{N}_{\tau} S^G_{\tau_{N+1}},\pi)\\
+\Lambda_{N,N+n}^{-1} \sum\limits_{k=N+1}^{N+n}\lambda_k\left\{\phi(\tau_{k+1}) + \frac{1}{2}\tau_{k+1} L_G^2\|K\|^2\right\}
\end{aligned}
\end{equation}
where $\phi(\tau)=L_F\sqrt{2d\tau}$ or $\phi(\tau)=\tau L_{\nabla F} d$ if \Cref{ass:general}, \ref{ass:F}, \ref{ass:Fa} or \ref{ass:Fb} is satisfied, respectively.
\end{theorem}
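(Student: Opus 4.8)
The plan is to telescope the single-step estimate from \Cref{lem:KL_ineq_general} along the iteration $\mu_{k+1} = \mu_k R_{\tau_k,\tau_{k+1}}$, weighting the $k$-th inequality by a factor proportional to $\lambda_k/\tau_{k+1}$ so that the Wasserstein terms cancel in a chain. Concretely, I would first apply \Cref{lem:KL_ineq_general} with $\mu$ replaced by $\mu Q^{k-1}_\tau$, $\tilde\tau = \tau_k$, and $\tau = \tau_{k+1}$, which gives, for each $k \geq N+1$,
\begin{equation*}
2\tau_{k+1}\left\{\Fc(\mu Q^k_\tau) - \Fc(\pi)\right\} \leq W^2_2(\mu Q^{k-1}_\tau S^G_{\tau_k},\pi) - W^2_2(\mu Q^k_\tau S^G_{\tau_{k+1}},\pi) + 2\tau_{k+1}\phi(\tau_{k+1}) + \tau_{k+1}^2 L_G^2\|K\|^2,
\end{equation*}
using that $\mu Q^{k-1}_\tau R_{\tau_k,\tau_{k+1}} = \mu Q^k_\tau$ and that $\mu Q^k_\tau R_{\tau_k,\tau_{k+1}} S^G_{\tau_{k+1}}$ in the notation of the proposition is exactly $\mu Q^k_\tau S^G_{\tau_{k+1}}$. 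Writing $a_k \coloneqq W^2_2(\mu Q^{k-1}_\tau S^G_{\tau_k},\pi)$, dividing by $2\tau_{k+1}$ and using $\Fc(\mu Q^k_\tau)-\Fc(\pi) = \KL(\mu Q^k_\tau|\pi)$ from \Cref{lem:F_diff_KL}, this reads $\KL(\mu Q^k_\tau|\pi) \leq \frac{1}{2\tau_{k+1}}(a_k - a_{k+1}) + \phi(\tau_{k+1}) + \frac{1}{2}\tau_{k+1}L_G^2\|K\|^2$.

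Next I would multiply the $k$-th inequality by $\lambda_k$ and sum over $k = N+1,\dots,N+n$. The convexity of $\KL(\cdot\,|\pi)$ together with the definition $\nu_n^N = \Lambda_{N,N+n}^{-1}\sum_{k=N+1}^{N+n}\lambda_k \mu Q^k_\tau$ gives, by Jensen's inequality, $\KL(\nu_n^N|\pi) \leq \Lambda_{N,N+n}^{-1}\sum_{k=N+1}^{N+n}\lambda_k \KL(\mu Q^k_\tau|\pi)$. The $\phi$ and $L_G$ terms then immediately produce the second line of the claimed bound after multiplying through by $\Lambda_{N,N+n}^{-1}$. It remains to control $\sum_{k=N+1}^{N+n}\frac{\lambda_k}{2\tau_{k+1}}(a_k - a_{k+1})$. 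This is an Abel summation: rewriting as $\sum_k \frac{\lambda_k}{2\tau_{k+1}}a_k - \sum_k \frac{\lambda_k}{2\tau_{k+1}}a_{k+1}$ and re-indexing the second sum, the coefficient of $a_k$ for $N+2 \leq k \leq N+n$ becomes $\frac{1}{2}\left(\frac{\lambda_k}{\tau_{k+1}} - \frac{\lambda_{k-1}}{\tau_k}\right)$, which is $\leq 0$ precisely by the hypothesis $\frac{\lambda_{k+1}}{\tau_{k+2}} \leq \frac{\lambda_k}{\tau_{k+1}}$ (shifted index). Since each $a_k \geq 0$, all of these interior terms may be discarded, the boundary term $-\frac{\lambda_{N+n}}{2\tau_{N+n+1}}a_{N+n+1} \leq 0$ is also discarded, and only the single leading term $\frac{\lambda_{N+1}}{2\tau_{N+2}}a_{N+1} = \frac{\lambda_{N+1}}{2\tau_{N+2}}W^2_2(\mu Q^N_\tau S^G_{\tau_{N+1}},\pi)$ survives. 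Dividing by $\Lambda_{N,N+n}$ yields the first line of the statement.

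The step I expect to require the most care is the bookkeeping of the transition-kernel compositions — making sure that the telescoping variable is consistently $W^2_2(\mu Q^{k-1}_\tau S^G_{\tau_k},\pi)$ and that the monotonicity condition on $\lambda_k/\tau_{k+1}$ lines up with exactly the right index shift so that the interior Abel-summation coefficients are nonpositive; a single off-by-one error here breaks the cancellation. The analytic content (convexity of $\KL$, nonnegativity of the squared Wasserstein distances, and the per-step estimate) is already in hand from \Cref{lem:F_diff_KL,lem:KL_ineq_general}, so beyond the indexing this is a routine summation argument. One should also note that all quantities $\KL(\mu Q^k_\tau|\pi)$ are finite — which follows since $\mu Q^k_\tau$ has finite second moment (each kernel maps $\Pc_2$ into $\Pc_2$) and $\Ec_U(\mu Q^k_\tau) < \infty$ by the Lipschitz/quadratic growth of $F$ and $G\circ K$, so \Cref{lem:F_diff_KL} applies — which is what legitimizes the identity $\Fc - \Fc(\pi) = \KL(\cdot\,|\pi)$ used throughout.
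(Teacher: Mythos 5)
Your proposal is correct and follows essentially the same route as the paper's proof: Jensen's inequality for the convex $\KL(\cdot\,|\pi)$, the per-step bound from \Cref{lem:KL_ineq_general} applied to $\mu Q^{k-1}_\tau$ with $\tilde\tau=\tau_k$, $\tau=\tau_{k+1}$, and an Abel summation in which the monotonicity of $\lambda_k/\tau_{k+1}$ together with nonnegativity of the squared Wasserstein distances kills all but the leading boundary term. The indexing lines up exactly with the paper's telescoping, and your additional remark on the finiteness of $\KL(\mu Q^k_\tau|\pi)$ is a sound (if implicit in the paper) justification for invoking \Cref{lem:F_diff_KL}.
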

\begin{proof}
Using \Cref{lem:KL_ineq_general} and convexity of the Kullback-Leibler divergence we compute
\begin{equation}\label{eq:telescope}
\begin{aligned}
\KL(\nu_n^N | \pi)\leq \Lambda_{N,N+n}^{-1} \sum\limits_{k=N+1}^{N+n} \lambda_k \KL(\mu Q^k_{\tau}|\pi) \\
\leq \Lambda_{N,N+n}^{-1} \sum\limits_{k=N+1}^{N+n}\frac{\lambda_k}{2\tau_{k+1}}\left\{W^2_2(\mu Q^{k-1}_{\tau} S^G_{\tau_k},\pi) - W^2_2(\mu Q^{k}_{\tau}S^G_{\tau_{k+1}},\pi)\right\} \\
+\Lambda_{N,N+n}^{-1} \sum\limits_{k=N+1}^{N+n}\lambda_k\left\{\phi(\tau_{k+1}) + \frac{1}{2}\tau_{k+1} L_G^2\|K\|^2\right\}\\
\leq \Lambda_{N,N+n}^{-1} \frac{\lambda_{N+1}}{2\tau_{N+2}} W^2_2(\mu Q^{N}_{\tau} S^G_{\tau_{N+1}},\pi)\\
+\Lambda_{N,N+n}^{-1} \sum\limits_{k=N+1}^{N+n-1} \underbrace{\left\{\frac{\lambda_{k+1}}{2\tau_{k+2}}-\frac{\lambda_k}{2\tau_{k+1}}\right\}}_{\leq 0} W^2_2(\mu Q^{k}_{\tau}S^G_{\tau_{k+1}},\pi)\\
+\Lambda_{N,N+n}^{-1} \sum\limits_{k=N+1}^{N+n}\lambda_k\left\{\phi(\tau_{k+1}) + \frac{1}{2}\tau_{k+1} L_G^2\|K\|^2\right\}\\
\leq \Lambda_{N,N+n}^{-1} \frac{\lambda_{N+1}}{2\tau_{N+2}} W^2_2(\mu Q^{N}_{\tau} S^G_{\tau_{N+1}},\pi)
+\Lambda_{N,N+n}^{-1} \sum\limits_{k=N+1}^{N+n}\lambda_k\left\{\phi(\tau_{k+1}) + \frac{1}{2}\tau_{k+1} L_G^2\|K\|^2\right\}
\end{aligned}
\end{equation}
concluding the proof.
\end{proof}
\begin{remark}\label{remark:step_sizes}
Note that the proof of \Cref{thm:conv_estimate_general} makes it clear why we apply different step sizes $\tau_k$ and $\tau_{k+1}$ during a single iteration of \Cref{algo:prox_subgrad}. The basis for the convergence proof is the estimate \eqref{eq:adding_prox_subgrad}. In order to obtain the elegant telescope sum in \eqref{eq:telescope} based on this estimate, we have to choose the Markov kernels as $R_{\tau_k,\tau_{k+1}}$ which couples the step sizes of consecutive updates $R_{\tau_k,\tau_{k+1}}$ and $R_{\tau_{k+1},\tau_{k+2}}$. The remaining degree of freedom regarding the step sizes is to choose different values for the $F$ and the $G$ update within a single iteration. We can also observe that, by the change of variables $Z_k\coloneqq X_k - \tau_kK^*Y_{k+1}$, \Cref{algo:prox_subgrad} can be rewritten as
\begin{equation}
\begin{cases}
Z_{k+\frac{1}{2}} = {\prox}_{\tau_{k+1}F}(Z_k) + \sqrt{2\tau_{k+1}}B_{k+1}\\
Y_{k+1} \in \partial G(K Z_{k+\frac{1}{2}})\\
Z_{k+1} = Z_{k+\frac{1}{2}} - \tau_{k+1}K^*Y_{k+1}
\end{cases}
\end{equation}
where now the same step size is used for both the proximal and the subgradient update. However, this comes at the cost, that convergence is not obtained directly for the variable $Z_k$, but only for $X_k$ as shown above.
\end{remark}
Choosing a constant stepsize $\tau_k=\tau$ and $\lambda_k=1$ for all $k$, \Cref{thm:conv_estimate_general} simplifies to 
\[\KL(\nu_n^N | \pi)\leq \frac{1}{2n\tau} W^2_2(\mu Q^{N}_{\tau} S^G_{\tau},\pi)+
\left\{\phi(\tau) + \frac{1}{2}\tau L_G^2\|K\|^2\right\}.\]
That is, to reach accuracy $\epsilon>0$ in $\KL$ the computational complexity results in $O(\epsilon^{-3})$ in the case that $F$ is Lipschitz continuous and $O(\epsilon^{-2})$ in the case that $F$ has Lipschitz gradient. More precisely, we obtain the following corollary.
\begin{corollary}\label{cor:prox_sub_conv_weak}
Let \cref{ass:general} hold, $N\in\N$ be a burn-in time and $\epsilon>0$ arbitrary. Choose $\lambda_k=1$ and $\tau_k=\tau_\epsilon>0$ fixed for all $k$ such that
\begin{equation}\label{eq:prox_grad_weak}
\begin{aligned}
\begin{cases}
L_F\sqrt{2d\tau_\epsilon} + \frac{1}{2}\tau_{\epsilon} L_G^2\|K\|^2 &< \frac{\epsilon}{2}\quad \text{under \Cref{ass:general}, \ref{ass:F}, \ref{ass:Fa}}\\
\tau_\epsilon \left(L_{\nabla F} d + \frac{1}{2}L_G^2\|K\|^2\right) &< \frac{\epsilon}{2}\quad \text{under \Cref{ass:general}, \ref{ass:F}, \ref{ass:Fb}}.
\end{cases}
\end{aligned}
\end{equation}
Then with $n_\epsilon\in\N$ such that $n_\epsilon>\frac{1}{\epsilon\tau_{\epsilon}} W^2_2(\mu Q^{N}_{\tau} S^G_{\tau_{\epsilon}},\pi)$, for any $n\geq n_\epsilon$, it holds $\KL(\nu_n^N | \pi)<\epsilon$.
\end{corollary}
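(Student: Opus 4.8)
The plan is to specialize \Cref{thm:conv_estimate_general} to the constant choices $\lambda_k=1$ and $\tau_k=\tau_\epsilon$ for all $k$, and then to bound the two resulting terms by $\epsilon/2$ each. First I would check that these choices are admissible for \Cref{thm:conv_estimate_general}: the required monotonicity $\frac{\lambda_{k+1}}{\tau_{k+2}}\le\frac{\lambda_k}{\tau_{k+1}}$ reads $\frac{1}{\tau_\epsilon}\le\frac{1}{\tau_\epsilon}$, which holds with equality, so the theorem applies and yields a bound on $\KL(\nu_n^N|\pi)$.

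With $\lambda_k=1$ and $\tau_k=\tau_\epsilon$ one has $\Lambda_{N,N+n}=\sum_{k=N+1}^{N+n}1=n$, and every summand in the second sum of the theorem's bound equals the constant $\phi(\tau_\epsilon)+\frac{1}{2}\tau_\epsilon L_G^2\|K\|^2$. Hence the estimate of \Cref{thm:conv_estimate_general} collapses to
\[
\KL(\nu_n^N|\pi)\le \frac{1}{2n\tau_\epsilon}\,W^2_2(\mu Q^{N}_{\tau}S^G_{\tau_\epsilon},\pi)+\phi(\tau_\epsilon)+\frac{1}{2}\tau_\epsilon L_G^2\|K\|^2 .
\]
It then remains to control the two terms on the right-hand side. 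For the $\tau_\epsilon$-dependent constant, I would distinguish the regularity cases: under \Cref{ass:general}, \ref{ass:F}, \ref{ass:Fa} one has $\phi(\tau_\epsilon)=L_F\sqrt{2d\tau_\epsilon}$, while under \ref{ass:Fb} one has $\phi(\tau_\epsilon)=\tau_\epsilon L_{\nabla F}d$; in either case the hypothesis \eqref{eq:prox_grad_weak} says precisely that $\phi(\tau_\epsilon)+\frac{1}{2}\tau_\epsilon L_G^2\|K\|^2<\epsilon/2$. For the first term, $\frac{1}{2n\tau_\epsilon}W^2_2(\mu Q^{N}_{\tau}S^G_{\tau_\epsilon},\pi)<\epsilon/2$ is equivalent to $n>\frac{1}{\epsilon\tau_\epsilon}W^2_2(\mu Q^{N}_{\tau}S^G_{\tau_\epsilon},\pi)$, which by the definition of $n_\epsilon$ holds for every $n\ge n_\epsilon$. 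Adding the two estimates gives $\KL(\nu_n^N|\pi)<\epsilon/2+\epsilon/2=\epsilon$, which is the claim.

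The only point deserving an explicit word is that $W^2_2(\mu Q^{N}_{\tau}S^G_{\tau_\epsilon},\pi)$ is finite, so that a valid $n_\epsilon$ exists; this holds because $\pi\in\Pc_2(\R^d)$ by \Cref{lem:F_diff_KL} and each of the kernels $S^G_\tau$, $S^F_\tau$, $T_\tau$ maps $\Pc_2(\R^d)$ into itself (the subgradient step perturbs $x$ by the bounded vector $\tau K^*\theta(Kx)$ since $G$ is $L_G$-Lipschitz, the proximal mapping is $1$-Lipschitz, and $T_\tau$ convolves with a Gaussian of finite second moment), so that $\mu Q^{N}_{\tau}S^G_{\tau_\epsilon}\in\Pc_2(\R^d)$. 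Apart from this, I do not anticipate any real obstacle: the corollary is essentially a bookkeeping consequence of \Cref{thm:conv_estimate_general}, the only mild care being the case split in the definition of $\phi$ and the matching of the constants with \eqref{eq:prox_grad_weak}.
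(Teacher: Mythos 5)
Your proof is correct and follows exactly the route the paper intends: specializing \Cref{thm:conv_estimate_general} to constant $\lambda_k=1$ and $\tau_k=\tau_\epsilon$ (noting the monotonicity condition holds with equality and $\Lambda_{N,N+n}=n$) and bounding each of the two resulting terms by $\epsilon/2$. The added remark on finiteness of $W^2_2(\mu Q^{N}_{\tau}S^G_{\tau_\epsilon},\pi)$ is a harmless bonus.
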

\begin{remark}\
\begin{itemize}
\item In the case of \Cref{ass:general}, \ref{ass:F}, \ref{ass:Fa}, the subgradient method from \cite[Section 4.1]{durmus2019analysis} also constitutes an applicable method with similar convergence guarantees. The case of \Cref{ass:general}, \ref{ass:F}, \ref{ass:Fb}, however, yields applicability of our method also to situations where $F$ is not Lipschitz continuous (e.g. the important case where $F$ is an $L^2$ discrepancy), a setting where no convergence guarantees for the subgradient method are currently available.

\item In terms of convergence rates, the results of the following section, dealing with higher regularity of $F$, constitute a second, significant improvement upon existing results. This is due to the fact that proximal-gradient methods such as the one from \cite[Section 4.2]{durmus2019analysis}, which would yield similar convergence rates, are not feasible for the setting at hand, since we assume the non-differentiability to be contained within $G$. In order to deploy the proximal-gradient method from \cite[Section 4.2]{durmus2019analysis} one would have to perform the proximal step with respect to $G\circ K$ which is, in general, not explicit.

\item Note that convergence in KL immediately implies convergence in the TV norm via Pisnker's inequality.
\end{itemize}
\end{remark}
\subsection{Improved Convergence for Smooth $F$}\label{sec:convergence_strong}
We will now improve our convergence results by imposing stronger conditions on $F$ without additionally constraining $G$. This will lead to convergence of the distributions $\mu Q^k_{\tau} S^G_{\tau_{k}}$ in Wasserstein distance directly rather than the average distributions $\nu_k^N$ as shown in \Cref{thm:convergence_strong,cor:prox_grad_w2}.
\begin{ass}\label{ass:strong} The functionals $F$ and $G$ satisfy the following regularity assumptions:
\begin{enumerate}[(1)]
\item $F$ is $m$-strongly convex and differentiable with Lipschitz continuous gradient with constant $L_{\nabla F}$.\label{ass:F_strong}
\item $G$ is convex and Lipschitz continuous with constant $L_G$.\label{ass:G_strong}
\end{enumerate}
\end{ass}
At first, we derive an improved bound on the proximal step as follows.
\begin{lemmaE}\label{lem:S1_strong}
Let \Cref{ass:strong}, \ref{ass:F} hold and select $\tau>0$ such that $\tau \leq \frac{m}{2L_{\nabla F}^2-m^2}$. Then for any $\mu,\nu\in\Pc_2(\R^d)$
\begin{equation}
2\tau \left\{\Ec_F(\mu S^F_\tau) - \Ec_F(\nu)\right\}
\leq (1- \frac{m\tau}{2}) W^2_2(\mu,\nu) - W^2_2(\mu S_\tau^F, \nu)
\end{equation}
\end{lemmaE}
\begin{proofE}
We denote $z={\prox}_{\tau F}(x)$. Since $F$ is $m$-strongly convex and has Lipschitz gradient, as shown in \cite[Lemma B.2]{valkonen2020testing}, we have for any $\tilde{x},\tilde{y},\tilde{z}\in \R^d$ and $\alpha>0$
\begin{equation}
F(\tilde{x}) - F(\tilde{y}) \leq \langle\nabla F(\tilde{z}),\tilde{x}-\tilde{y}\rangle - \frac{m-\alpha L_{\nabla F}^2}{2}\|\tilde{x}-\tilde{y}\|^2 + \frac{1}{2\alpha}\|\tilde{x}-\tilde{z}\|^2.
\end{equation}
In particular, setting $\tilde{x} = z,\tilde{y} = x,\tilde{z} = z$ and noticing that, since $z={\prox}_{\tau F}(x)$ it holds true that $\nabla F(z) = \tau^{-1}(x-z)$, we obtain
\begin{equation}
\begin{aligned}
F(z) - F(x) \leq \langle\nabla F(z),z-x\rangle - \frac{m-\alpha L_{\nabla F}^2}{2}\|z-x\|^2 + \frac{1}{2\alpha}\|z-z\|^2\\ = 
-\tau^{-1} \|z-x\|^2 - \frac{m-\alpha L_{\nabla F}^2}{2}\|z-x\|^2.
\end{aligned}
\end{equation}
Letting $\alpha\rightarrow 0$, this becomes $F(z) - F(x) \leq -(\tau^{-1}+\frac{m}{2})\|z-x\|^2.$ Setting, $\tilde{x} = x,\tilde{y} = w,\tilde{z} = z$ on the other hand yields 
\begin{equation}
F(x) - F(w) \leq \langle\nabla F(z),x-w\rangle - \frac{m-\alpha L_{\nabla F}^2}{2}\|x-w\|^2 + \frac{1}{2\alpha}\|x-z\|^2.
\end{equation}
Therefore we find
\begin{equation}
\begin{aligned}
2\tau \left\{F(z)-F(w)\right\} = 2\tau \left\{F(z) - F(x) + F(x) - F(w)\right\}\\\leq
-(2+m\tau)\|z-x\|^2 + 2\tau\langle\nabla F(z),x-w\rangle - \tau(m-\alpha L_{\nabla F}^2)\|x-w\|^2 + \frac{\tau}{\alpha}\|x-z\|^2\\ 
\underbrace{- \| z-w \|^2 + \|z-x\|^2 + \|x-w\|^2 +2\tau \langle \tau^{-1}(z-x), x-w\rangle }_{=0}\\ =
(\frac{\tau}{\alpha}-1-m\tau)\|z-x\|^2 + 2\tau\langle\nabla F(z),x-w\rangle + (1- \tau(m-\alpha L_{\nabla F}^2))\|x-w\|^2 \\
- \| z-w \|^2 +2\tau \langle -\nabla F(z), x-w\rangle
\leq (1- \frac{m\tau}{2})\|x-w\|^2 - \| z-w \|^2
\end{aligned}
\end{equation}
where we chose $\alpha = \frac{\tau}{1+m\tau}$ and employed the assumption on $\tau$. By plugging in an optimal coupling $(X,W)$ for the distributions $(\mu,\nu)$ and taking the expectation we find
\[ 2\tau \left\{\Ec_F(\mu S^F_\tau) - \Ec_F(\nu)\right\}
\leq (1- \frac{m\tau}{2}) W^2_2(\mu,\nu) - W^2_2(\mu S_\tau^F, \nu).\]
\end{proofE}
\begin{remark}\
\begin{itemize}
\item Note that it always holds true that $m\leq L_{\nabla F}$ since $m$-strong convexity of $F$ is equivalent to
\[m\|x-y\|^2\leq \langle\nabla F(x)-\nabla F(y),x-y\rangle,\quad \text{for any $x,y$}.\]
Using the Lipschitz continuity of $\nabla F$ we find $m\|x-y\|^2\leq L_{\nabla F}\|x-y\|^2$. Therefore the upper bound on $\tau$ is always a positive real number.
\item It always holds true that $(1- \frac{m\tau}{2})>0$ since
\[\tau\leq\frac{m}{2L_{\nabla F}^2-m^2}\leq \frac{m}{2m^2-m^2} = \frac{1}{m}< \frac{2}{m}.\]
\end{itemize}
\end{remark}
Consequently, this leads to a stronger estimate on the free energy functional.
\begin{proposition}\label{lem:KL_ineq_strong}
Let \Cref{ass:strong} be satisfied and select $\tau>0$ such that $\tau\leq \frac{m}{2L_{\nabla F}^2-m^2}$. Then for any $\mu \in \Pc_2(\R^d)$
\begin{equation}
\begin{aligned}
2\tau\left\{ \Fc(\mu R_{\tilde{\tau},\tau})-\Fc(\pi)\right\} \leq
(1 - \frac{m\tau}{2}) W^2_2(\mu S^G_{\tilde{\tau}},\pi) - W^2_2(\mu R_{\tilde{\tau},\tau}S^G_{\tau},\pi)\\ + 2 L_{\nabla F} d\tau^2+  \tau^2 L_G^2\|K\|^2 
\end{aligned}
\end{equation}
\end{proposition}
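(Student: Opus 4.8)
The plan is to follow the proof of \Cref{lem:KL_ineq_general} line for line, using the identity $\mu R_{\tilde{\tau},\tau} = \mu S^G_{\tilde{\tau}} S^F_{\tau} T_{\tau}$ and the same decomposition of $\Fc(\mu R_{\tilde{\tau},\tau}) - \Fc(\pi)$ into the four differences $\Ec_F(\mu R_{\tilde{\tau},\tau}) - \Ec_F(\mu S^G_{\tilde{\tau}} S^F_{\tau})$, $\Ec_F(\mu S^G_{\tilde{\tau}} S^F_{\tau}) - \Ec_F(\pi)$, $\Ec_{G\circ K}(\mu R_{\tilde{\tau},\tau}) - \Ec_{G\circ K}(\pi)$ and $\Hc(\mu R_{\tilde{\tau},\tau}) - \Hc(\pi)$, but upgrading two of the constituent estimates to exploit \Cref{ass:strong}. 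For the first difference I would apply \Cref{lem:operator_T} in its Lipschitz-gradient form with base measure $\mu S^G_{\tilde{\tau}} S^F_{\tau}$, giving $2\tau\{\Ec_F(\mu R_{\tilde{\tau},\tau}) - \Ec_F(\mu S^G_{\tilde{\tau}} S^F_{\tau})\} \leq 2 L_{\nabla F} d\, \tau^2$. For the second difference I would invoke the refined proximal estimate \Cref{lem:S1_strong} — which is exactly where the hypothesis $\tau \leq \frac{m}{2L_{\nabla F}^2 - m^2}$ enters — with $(\mu S^G_{\tilde{\tau}}, \pi)$ in place of $(\mu,\nu)$, giving $2\tau\{\Ec_F(\mu S^G_{\tilde{\tau}} S^F_{\tau}) - \Ec_F(\pi)\} \leq (1 - \frac{m\tau}{2}) W^2_2(\mu S^G_{\tilde{\tau}},\pi) - W^2_2(\mu S^G_{\tilde{\tau}} S^F_{\tau},\pi)$. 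The remaining two differences are handled exactly as in \Cref{lem:KL_ineq_general}: \Cref{lem:subgrad} applied to $(\mu R_{\tilde{\tau},\tau},\pi)$ with step size $\tau$ bounds $2\tau\{\Ec_{G\circ K}(\mu R_{\tilde{\tau},\tau}) - \Ec_{G\circ K}(\pi)\}$ by $W^2_2(\mu R_{\tilde{\tau},\tau},\pi) - W^2_2(\mu R_{\tilde{\tau},\tau} S^G_{\tau},\pi) + \tau^2 L_G^2\|K\|^2$, and \Cref{lem:H_inequality} applied to $(\mu S^G_{\tilde{\tau}} S^F_{\tau},\pi)$ bounds $2\tau\{\Hc(\mu R_{\tilde{\tau},\tau}) - \Hc(\pi)\}$ by $W^2_2(\mu S^G_{\tilde{\tau}} S^F_{\tau},\pi) - W^2_2(\mu R_{\tilde{\tau},\tau},\pi)$.

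Summing the four inequalities then finishes the proof: the pair $\pm W^2_2(\mu S^G_{\tilde{\tau}} S^F_{\tau},\pi)$ and the pair $\pm W^2_2(\mu R_{\tilde{\tau},\tau},\pi)$ cancel just as in \Cref{lem:KL_ineq_general}, and what survives is
\begin{equation*}
\begin{aligned}
2\tau\left\{\Fc(\mu R_{\tilde{\tau},\tau}) - \Fc(\pi)\right\} &\leq \left(1 - \frac{m\tau}{2}\right) W^2_2(\mu S^G_{\tilde{\tau}},\pi) - W^2_2(\mu R_{\tilde{\tau},\tau} S^G_{\tau},\pi) \\
&\quad + 2 L_{\nabla F} d\, \tau^2 + \tau^2 L_G^2\|K\|^2,
\end{aligned}
\end{equation*}
which is exactly the assertion.

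I do not expect a real obstacle here: everything that distinguishes this proposition from \Cref{lem:KL_ineq_general} — the strong-convexity improvement of the proximal step and the admissible range of $\tau$ — has already been isolated in \Cref{lem:S1_strong}, and the rest is the same telescoping bookkeeping (plus the harmless observation that the kernels $S^G_{\tilde\tau}$, $S^F_{\tau}$, $T_{\tau}$ preserve membership in $\Pc_2(\R^d)$, so every measure appearing above is a legitimate argument of the lemmas). The one point worth flagging, which is the whole motivation of this subsection, is structural rather than technical: the contraction factor $1 - \frac{m\tau}{2}$ multiplies precisely $W^2_2(\mu S^G_{\tilde{\tau}},\pi)$, the same quantity — with shifted indices — that is reproduced by the trailing term $-W^2_2(\mu R_{\tilde{\tau},\tau} S^G_{\tau},\pi)$; iterating this estimate along the chain therefore yields a genuine geometric recursion for $W^2_2(\mu Q^k_{\tau} S^G_{\tau_k},\pi)$, and hence direct convergence in Wasserstein distance rather than only for the running averages $\nu^N_n$.
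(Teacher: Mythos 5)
Your proposal is correct and matches the paper's own proof, which is stated as being obtained ``analogously to the proof of \Cref{lem:KL_ineq_general} but by making use of the improved estimate from \Cref{lem:S1_strong} instead of \Cref{lem:SF_general}.'' The four constituent bounds, the cancellation pattern, and the role of the step-size restriction $\tau\leq \frac{m}{2L_{\nabla F}^2-m^2}$ are all exactly as the paper intends.
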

\begin{proof}
The result is obtained analogously to the proof of \Cref{lem:KL_ineq_general} but by making use of the improved estimate from \Cref{lem:S1_strong} instead of \Cref{lem:SF_general}.
\end{proof}
In this case, we can deduce a convergence result without relying on a running average over the iterates.
\begin{theorem}\label{thm:convergence_strong}
Let \Cref{ass:strong} be satisfied and select the stepsizes $(\tau_k)_k$ such that $\tau_k\leq\frac{m}{2L_{\nabla F}^2-m^2}$ for all $k$. Then for any $\mu \in \Pc_2(\R^d)$
\begin{equation}
\begin{aligned}
W^2_2(\mu Q^k_{\tau} S^G_{\tau_{k+1}},\pi)\leq \prod\limits_{j=2}^{k+1}(1 - \frac{m\tau_j}{2}) W^2_2(\mu S^G_{\tau_1},\pi) 
+ \left(2 L_{\nabla F} d + L_G^2\|K\|^2\right)\sum\limits_{j=2}^{k+1} \tau_{j}^2 \prod\limits_{i=j+1}^{k+1}(1 - \frac{m\tau_i}{2}).
\end{aligned}
\end{equation}
In particular, for constant $\tau_j=\tau$ it holds true that
\begin{equation}
\begin{aligned}
W^2_2(\mu Q^k_{\tau} S^G_{\tau},\pi)\leq
(1 - \frac{m\tau}{2})^k W^2_2(\mu S^G_{\tau},\pi)
+ \left(2 L_{\nabla F} d + L_G^2\|K\|^2\right)\frac{2}{m}\tau
\end{aligned}
\end{equation}
\end{theorem}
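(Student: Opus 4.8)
The plan is to convert the free-energy estimate of \Cref{lem:KL_ineq_strong} into a single-step recursion for the squared Wasserstein distance of $\mu Q^k_{\tau}S^G_{\tau_{k+1}}$ to $\pi$, and then simply unroll that recursion.

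First I would note that every iterate $\mu Q^k_{\tau}$ lies in $\Pc_2(\R^d)$ and satisfies $\Ec_U(\mu Q^k_{\tau})<\infty$: one application of $R_{\tilde{\tau},\tau}=S^G_{\tilde{\tau}}S^F_{\tau}T_{\tau}$ is a translation by a bounded vector (since elements of $\partial G$ are bounded by $L_G$), a $1$-Lipschitz proximal map, and a convolution with a Gaussian, hence preserves finiteness of the second moment, and $F$ and $G\circ K$ have at most linear growth under \Cref{ass:strong}. Thus \Cref{lem:F_diff_KL} applies and gives $\Fc(\mu Q^k_{\tau})-\Fc(\pi)=\KL(\mu Q^k_{\tau}|\pi)\geq 0$. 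Applying \Cref{lem:KL_ineq_strong} with $\mu\to\mu Q^{k-1}_{\tau}$, $\tilde{\tau}\to\tau_k$, $\tau\to\tau_{k+1}$ (legitimate since $\tau_{k+1}\leq\frac{m}{2L_{\nabla F}^2-m^2}$ by hypothesis) and dropping the non-negative left-hand side yields, with $\mu Q^{k-1}_{\tau}R_{\tau_k,\tau_{k+1}}=\mu Q^k_{\tau}$,
\[
W^2_2(\mu Q^k_{\tau}S^G_{\tau_{k+1}},\pi)\leq \Big(1-\tfrac{m\tau_{k+1}}{2}\Big)W^2_2(\mu Q^{k-1}_{\tau}S^G_{\tau_{k}},\pi)+\big(2L_{\nabla F}d+L_G^2\|K\|^2\big)\tau_{k+1}^2.
\]
Writing $a_k:=W^2_2(\mu Q^k_{\tau}S^G_{\tau_{k+1}},\pi)$ and $C:=2L_{\nabla F}d+L_G^2\|K\|^2$, and noting $a_0=W^2_2(\mu S^G_{\tau_1},\pi)$ because $\mu Q^0_{\tau}=\mu$, this is the recursion $a_k\leq(1-\tfrac{m\tau_{k+1}}{2})a_{k-1}+C\tau_{k+1}^2$.

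Next I would unroll this by a routine induction on $k$, which gives
\[
a_k\leq \Big(\prod_{j=2}^{k+1}\big(1-\tfrac{m\tau_j}{2}\big)\Big)a_0+C\sum_{j=2}^{k+1}\tau_j^2\prod_{i=j+1}^{k+1}\big(1-\tfrac{m\tau_i}{2}\big),
\]
with the convention that the empty product equals $1$; this is exactly the first asserted bound. For constant $\tau_j=\tau$ the products become powers, $\prod_{j=2}^{k+1}(1-\tfrac{m\tau}{2})=(1-\tfrac{m\tau}{2})^k$, and the second sum is geometric, $\sum_{j=2}^{k+1}\tau^2(1-\tfrac{m\tau}{2})^{k+1-j}=\tau^2\sum_{\ell=0}^{k-1}(1-\tfrac{m\tau}{2})^\ell\leq \tau^2\big/\big(1-(1-\tfrac{m\tau}{2})\big)=\tfrac{2\tau}{m}$, where we use $0<1-\tfrac{m\tau}{2}<1$ (valid since $\tau\leq\frac{m}{2L_{\nabla F}^2-m^2}\leq\frac1m$, as observed in the remark after \Cref{lem:S1_strong}). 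This yields the second bound.

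The argument is essentially bookkeeping once \Cref{lem:KL_ineq_strong} is available; the only steps needing a little care are verifying that \Cref{lem:F_diff_KL} truly applies to each iterate, so that the free-energy difference is non-negative and the leading Wasserstein term may be discarded, and keeping precise track of which kernel $S^G_{\tau_k}$ is attached to $\mu Q^{k-1}_{\tau}$ so that the Wasserstein terms line up correctly as the recursion is telescoped. Neither presents a genuine obstacle.
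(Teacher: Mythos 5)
Your proposal is correct and follows essentially the same route as the paper: apply \Cref{lem:KL_ineq_strong} with $\mu\mapsto\mu Q^{k-1}_{\tau}$, $\tilde{\tau}=\tau_k$, $\tau=\tau_{k+1}$, discard the left-hand side using the non-negativity of the KL divergence via \Cref{lem:F_diff_KL}, and unroll the resulting one-step recursion, summing the geometric series in the constant-step case. Your extra remarks on the finiteness of second moments and on $0<1-\tfrac{m\tau}{2}<1$ are accurate and only make explicit what the paper leaves implicit.
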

\begin{proof}
Setting $\mu \mapsto \mu Q^{k-1}_{\tau}$, $\tilde{\tau}=\tau_k$, and $\tau=\tau_{k+1}$ in \Cref{lem:KL_ineq_strong}, the non-negativity of the Kullback-Leibler divergence together with \Cref{lem:F_diff_KL} implies
\begin{equation}
\begin{aligned}
W^2_2(\mu Q^k_{\tau} S^G_{\tau_{k+1}},\pi)\leq
(1 - \frac{m\tau_{k+1}}{2}) W^2_2(\mu Q^{k-1}_{\tau} S^G_{\tau_k},\pi) 
+ \left(2 L_{\nabla F} d + L_G^2\|K\|^2\right) \tau_{k+1}^2.
\end{aligned}
\end{equation}
Iterating over $k$ yields the first assertion. In particular, fixing $\tau_k = \tau$ for all $k$, we obtain
\begin{equation}
\begin{aligned}
W^2_2(\mu Q^k_{\tau} S^G_{\tau},\pi)\leq (1 - \frac{m\tau}{2})^k W^2_2(\mu S^G_{\tau},\pi)
+ \sum\limits_{j=2}^{k+1} \left(2 L_{\nabla F} d + L_G^2\|K\|^2\right)\tau^2 \prod\limits_{i=j+1}^{k+1}(1 - \frac{m\tau}{2})\\=
(1 - \frac{m\tau}{2})^k W^2_2(\mu S^G_{\tau},\pi)
+ \left(2 L_{\nabla F} d + L_G^2\|K\|^2\right)\tau^2\sum\limits_{j=2}^{k+1} (1 - \frac{m\tau}{2})^{k-j+1}\\=
(1 - \frac{m\tau}{2})^k W^2_2(\mu S^G_{\tau},\pi)
+ \left(2 L_{\nabla F} d + L_G^2\|K\|^2\right)2\tau\frac{1-(1 - \frac{m\tau}{2})^k}{m}\\ \leq
(1 - \frac{m\tau}{2})^k W^2_2(\mu S^G_{\tau},\pi)
+ \left(2 L_{\nabla F} d + L_G^2\|K\|^2\right)\frac{2}{m}\tau
\end{aligned}
\end{equation}
concluding the proof.
\end{proof}
In the setting of constant step size, \Cref{thm:convergence_strong} implies that in order to reach accuracy $\epsilon>0$ in squared Wasserstein 2-distance, $O(\epsilon^{-1} \log(\epsilon^{-1}))$ iterations will be needed. More precisely, we obtain the following non-asymptotic result.
\begin{corollary}\label{cor:prox_grad_w2}
Let \Cref{ass:strong} hold and $\tau_k=\tau_\epsilon$ for all $k$ and $n_\epsilon\in\N$ be such that
\begin{equation}\label{eq:stepsize_prox_grad_strong}
\tau_\epsilon<\min\left\{\frac{m\epsilon}{4}\left(2 L_{\nabla F} d + L_G^2\|K\|^2\right)^{-1}, \frac{m}{2L_{\nabla F}^2-m^2}\right\},\quad n_\epsilon>\frac{\log\left(\frac{\epsilon}{2W^2_2(\mu S^G_{\tau},\pi)}\right)}{\log\left(1 - \frac{m\tau_\epsilon}{2}\right)}.
\end{equation}
Then for any $n\geq n_\epsilon$, it holds $W^2_2(\mu Q^n_{\tau} S^G_{\tau},\pi)<\epsilon$.
\end{corollary}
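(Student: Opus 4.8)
The plan is to read the corollary off the constant step size estimate in \Cref{thm:convergence_strong} by splitting the prescribed accuracy $\epsilon$ into two equal contributions: one absorbed by the geometrically decaying transient term and one by the $n$-independent bias term. First I would check that the prescription \eqref{eq:stepsize_prox_grad_strong} is consistent: the upper bound on $\tau_\epsilon$ is the minimum of two strictly positive numbers (the second being positive because $m\le L_{\nabla F}$, as recorded in the remark following \Cref{lem:S1_strong}), so an admissible $\tau_\epsilon>0$ exists; and since in particular $\tau_\epsilon\le \frac{m}{2L_{\nabla F}^2-m^2}$, \Cref{thm:convergence_strong} applies with the constant choice $\tau_k=\tau_\epsilon$ and yields, for every $n\in\N$,
\[
W^2_2(\mu Q^n_{\tau} S^G_{\tau},\pi)\le \Bigl(1-\tfrac{m\tau_\epsilon}{2}\Bigr)^n W^2_2(\mu S^G_{\tau},\pi) + \bigl(2 L_{\nabla F} d + L_G^2\|K\|^2\bigr)\tfrac{2}{m}\,\tau_\epsilon .
\]

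Next I would dispatch the bias term: the first inequality in \eqref{eq:stepsize_prox_grad_strong} gives $\bigl(2 L_{\nabla F} d + L_G^2\|K\|^2\bigr)\tau_\epsilon < \tfrac{m\epsilon}{4}$, hence $\bigl(2 L_{\nabla F} d + L_G^2\|K\|^2\bigr)\tfrac{2}{m}\tau_\epsilon < \tfrac{\epsilon}{2}$, uniformly in $n$. For the transient term, the second remark after \Cref{lem:S1_strong} ensures $0<1-\tfrac{m\tau_\epsilon}{2}<1$, so $\log\bigl(1-\tfrac{m\tau_\epsilon}{2}\bigr)<0$ and $n\mapsto \bigl(1-\tfrac{m\tau_\epsilon}{2}\bigr)^n$ is strictly decreasing. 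In the nontrivial regime $\epsilon<2W^2_2(\mu S^G_{\tau},\pi)$, the requirement $\bigl(1-\tfrac{m\tau_\epsilon}{2}\bigr)^n W^2_2(\mu S^G_{\tau},\pi)<\tfrac{\epsilon}{2}$ is, after taking logarithms and dividing by the negative quantity $\log\bigl(1-\tfrac{m\tau_\epsilon}{2}\bigr)$ (which reverses the inequality), precisely the lower bound on $n_\epsilon$ imposed in \eqref{eq:stepsize_prox_grad_strong}; by monotonicity the bound then holds for all $n\ge n_\epsilon$. Adding the two halves gives $W^2_2(\mu Q^n_{\tau} S^G_{\tau},\pi)<\epsilon$ for every $n\ge n_\epsilon$.

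I expect no genuine obstacle here: the corollary is a direct quantitative unpacking of \Cref{thm:convergence_strong}. The only points that require attention are the sign of $\log\bigl(1-\tfrac{m\tau_\epsilon}{2}\bigr)$ when inverting the exponential inequality, and the degenerate case $\epsilon\ge 2W^2_2(\mu S^G_{\tau},\pi)$, in which the transient term is already below $\epsilon/2$ for $n=0$ and the stated formula for $n_\epsilon$ becomes vacuous; this case can be excluded without loss of generality or absorbed into the statement.
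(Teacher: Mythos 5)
Your proposal is correct and follows exactly the intended derivation: the paper states this corollary as an immediate consequence of the constant step size bound in \Cref{thm:convergence_strong}, splitting $\epsilon$ into $\epsilon/2$ for the bias term (via the first condition on $\tau_\epsilon$) and $\epsilon/2$ for the geometric transient (via the condition on $n_\epsilon$, with the sign flip from dividing by $\log(1-\tfrac{m\tau_\epsilon}{2})<0$). Your additional checks on admissibility of $\tau_\epsilon$ and the degenerate case $\epsilon\geq 2W_2^2(\mu S^G_{\tau},\pi)$ are sound but not part of the paper's (omitted) argument.
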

As in \cite{ehrhardt2023proximal} we can show that a clever, non-constant step size leads to direct convergence without a remaining bias. For the proof we refer the reader to \cite[Theorem 3.3]{ehrhardt2023proximal}.
\begin{corollaryE}\label{cor:decreasing_step_size}
Let \Cref{ass:strong} hold. If the step sizes $(\tau_j)_j$ satisfy
\begin{enumerate}
\item $\lim\limits_{j\rightarrow \infty}\tau_j=0$, $\sum\limits_{j =1}^\infty\tau_j=\infty$,
\item $\frac{\tau_j}{1+\frac{m\tau_j}{2}}\leq \tau_{j+1}\leq\tau_j\leq \tau_1\leq \frac{m}{2L_{\nabla F}^2-m^2}$,
\item $\limsup\limits_{k\rightarrow\infty}\sum\limits_{j=2}^{k+1} \tau_{j} \prod\limits_{i=j+1}^{k+1}(1 - \frac{m\tau_i}{2})<\infty$,
\end{enumerate}
then it holds true that $W^2_2(\mu Q^k_{\tau} S^G_{\tau_{k+1}},\pi)\rightarrow 0,$ as $k\rightarrow \infty$.
\end{corollaryE}
An explicit sequence of step sizes satisfying the conditions of \Cref{cor:decreasing_step_size} is also provided in \cite{ehrhardt2023proximal}. Moreover, analogously to \Cref{thm:conv_estimate_general} we can as well deduce convergence in the Kullback-Leibler divergence for the running average over the iterates.
\begin{theorem}\label{thm:conv_estimate_strong}
Let \Cref{ass:strong} hold and select $(\lambda_k)_k$ and $(\tau_k)_k$ such that for all $k$, $\tau_k\leq \frac{m}{2L_{\nabla F}^2-m^2}$ and $\frac{\lambda_{k+1}}{\tau_{k+2}}\left(1-\frac{m\tau_{k+2}}{2}\right)\leq \frac{\lambda_k}{\tau_{k+1}}$. Then
\begin{equation}
\begin{aligned}
\KL(\nu_n^N | \pi)\leq \Lambda_{N,N+n}^{-1} \frac{\lambda_{N+1}}{2\tau_{N+2}}(1-\frac{m \tau_{N+2}}{2}) W^2_2(\mu Q^{N}_{\tau} S^G_{\tau_{N+1}},\pi)\\
+\Lambda_{N,N+n}^{-1} \sum\limits_{k=N+1}^{N+n} \lambda_k\left\{ \tau_{k+1} L_{\nabla F} d + \frac{1}{2}\tau_{k+1} L_G^2\|K\|^2 \right\}.
\end{aligned}
\end{equation}
\end{theorem}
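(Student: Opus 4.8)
The plan is to repeat, almost verbatim, the argument of \Cref{thm:conv_estimate_general}, but feeding in the sharper one-step estimate of \Cref{lem:KL_ineq_strong} in place of \eqref{eq:adding_prox_subgrad}. First I would instantiate \Cref{lem:KL_ineq_strong} with $\mu\mapsto \mu Q^{k-1}_{\tau}$, $\tilde{\tau}=\tau_k$, and $\tau=\tau_{k+1}$, using that $\mu Q^{k}_{\tau}=\mu Q^{k-1}_{\tau}R_{\tau_k,\tau_{k+1}}$ and that, by \Cref{lem:F_diff_KL}, $\Fc(\mu Q^{k}_{\tau})-\Fc(\pi)=\KL(\mu Q^{k}_{\tau}\,|\,\pi)$ (noting $\mu Q^{k}_{\tau}\in\Pc_2(\R^d)$ with $\Ec_U(\mu Q^{k}_{\tau})<\infty$, exactly as in the general case). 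This yields, for every $k$,
\[
2\tau_{k+1}\KL(\mu Q^{k}_{\tau}\,|\,\pi)\leq \Bigl(1-\tfrac{m\tau_{k+1}}{2}\Bigr)W^2_2(\mu Q^{k-1}_{\tau}S^G_{\tau_k},\pi) - W^2_2(\mu Q^{k}_{\tau}S^G_{\tau_{k+1}},\pi) + \bigl(2L_{\nabla F}d+L_G^2\|K\|^2\bigr)\tau_{k+1}^2.
\]

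Next I would divide by $2\tau_{k+1}$, multiply by $\lambda_k$, sum over $k=N+1,\dots,N+n$, and invoke convexity of $\KL(\cdot\,|\,\pi)$ to get $\KL(\nu_n^N\,|\,\pi)\leq \Lambda_{N,N+n}^{-1}\sum_{k=N+1}^{N+n}\lambda_k\KL(\mu Q^{k}_{\tau}\,|\,\pi)$. As in \eqref{eq:telescope}, the Wasserstein term $W^2_2(\mu Q^{j}_{\tau}S^G_{\tau_{j+1}},\pi)$ for $N+1\leq j\leq N+n-1$ then appears with the combined weight $\tfrac{\lambda_{j+1}}{2\tau_{j+2}}\bigl(1-\tfrac{m\tau_{j+2}}{2}\bigr)-\tfrac{\lambda_j}{2\tau_{j+1}}$, which is nonpositive precisely by the assumed condition $\tfrac{\lambda_{j+1}}{\tau_{j+2}}\bigl(1-\tfrac{m\tau_{j+2}}{2}\bigr)\leq\tfrac{\lambda_j}{\tau_{j+1}}$; the $j=N$ term survives with weight $\tfrac{\lambda_{N+1}}{2\tau_{N+2}}\bigl(1-\tfrac{m\tau_{N+2}}{2}\bigr)$, and the $j=N+n$ term carries a nonpositive weight and is discarded. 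The remaining ``constant'' contributions collapse to $\Lambda_{N,N+n}^{-1}\sum_{k=N+1}^{N+n}\lambda_k\bigl(\tau_{k+1}L_{\nabla F}d+\tfrac12\tau_{k+1}L_G^2\|K\|^2\bigr)$, and combining this with the surviving boundary term gives exactly the claimed inequality.

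The only point requiring genuine care, as opposed to the proof of \Cref{thm:conv_estimate_general}, is that the contraction factor $\bigl(1-\tfrac{m\tau}{2}\bigr)$ in front of $W^2_2(\mu S^G_{\tilde{\tau}},\pi)$ turns the exact telescope of \eqref{eq:telescope} into an inexact one, so one must check that the \emph{modified} step-size condition still forces all interior Wasserstein terms to enter with nonpositive coefficient; this is true by design of that condition. Everything else — the instantiation of \Cref{lem:KL_ineq_strong}, the application of convexity of the KL divergence, and the bookkeeping of the error terms — is routine and structurally identical to the proof of \Cref{thm:conv_estimate_general}.
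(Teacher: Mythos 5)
Your proposal is correct and matches the paper's argument exactly: the paper proves \Cref{thm:conv_estimate_strong} by repeating the telescoping argument of \Cref{thm:conv_estimate_general} with \Cref{lem:KL_ineq_strong} in place of \Cref{lem:KL_ineq_general}, and the modified weight condition is there precisely so that the interior Wasserstein terms retain nonpositive coefficients despite the contraction factor. No gaps.
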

\begin{proof}
The proof is analogous to the one of \Cref{thm:conv_estimate_general}.
\end{proof}

\begin{corollary}
Let \Cref{ass:strong} hold and $N\in\N$ be a burn-in time and $\epsilon>0$ arbitrary. Choose $\lambda_k=1$ and $\tau_k=\tau_\epsilon>0$ fixed for all $k$ such that
\begin{equation}
\begin{aligned}
\tau_{\epsilon}< \min\left\{\frac{\epsilon}{2}\left(L_{\nabla F} d + \frac{1}{2} L_G^2\|K\|^2\right)^{-1},\frac{m}{2L_{\nabla F}^2-m^2}\right\}.
\end{aligned}
\end{equation}
With $n_\epsilon\in\N$ satisfying $n_\epsilon>\frac{1}{\epsilon\tau_{\epsilon}}(1 - \frac{m\tau_{\epsilon}}{2}) W^2_2(\mu Q^{N}_{\tau} S^G_{\tau_{\epsilon}},\pi)$, for any $n\geq n_\epsilon$ it holds $\KL(\nu_n^N | \pi)<\epsilon$.
\end{corollary}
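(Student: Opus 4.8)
The plan is to read this off from \Cref{thm:conv_estimate_strong} by specializing to constant weights and step sizes and then estimating the two terms of the resulting bound separately against $\epsilon/2$.

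First I would verify the hypotheses of \Cref{thm:conv_estimate_strong} for the choice $\lambda_k = 1$, $\tau_k = \tau_\epsilon$. The bound $\tau_k \le \frac{m}{2L_{\nabla F}^2 - m^2}$ is imposed on $\tau_\epsilon$ by assumption, and the monotonicity requirement $\frac{\lambda_{k+1}}{\tau_{k+2}}\big(1 - \frac{m\tau_{k+2}}{2}\big) \le \frac{\lambda_k}{\tau_{k+1}}$ collapses to $\frac{1}{\tau_\epsilon}\big(1 - \frac{m\tau_\epsilon}{2}\big) \le \frac{1}{\tau_\epsilon}$, which is immediate since $0 < 1 - \frac{m\tau_\epsilon}{2} \le 1$ (positivity being the remark following \Cref{lem:S1_strong}). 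With these choices $\Lambda_{N,N+n} = n$ and $\tau_{N+1} = \tau_{N+2} = \tau_\epsilon$, so \Cref{thm:conv_estimate_strong} gives
\[
\KL(\nu_n^N \,|\, \pi) \le \frac{1}{2 n \tau_\epsilon}\Big(1 - \tfrac{m\tau_\epsilon}{2}\Big)\, W_2^2\big(\mu Q^{N}_{\tau} S^G_{\tau_\epsilon}, \pi\big) + \tau_\epsilon\Big(L_{\nabla F}\, d + \tfrac12 L_G^2 \|K\|^2\Big).
\]

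Then I would close the estimate term by term. The second summand is a bias contribution and is $< \epsilon/2$ directly by the first condition imposed on $\tau_\epsilon$. For the first summand, any $n \ge n_\epsilon$ satisfies $n \ge n_\epsilon > \frac{1}{\epsilon \tau_\epsilon}\big(1 - \frac{m\tau_\epsilon}{2}\big) W_2^2(\mu Q^{N}_{\tau} S^G_{\tau_\epsilon}, \pi)$, hence $\frac{1}{2n\tau_\epsilon}\big(1 - \frac{m\tau_\epsilon}{2}\big) W_2^2(\mu Q^{N}_{\tau} S^G_{\tau_\epsilon}, \pi) < \epsilon/2$. Adding the two bounds yields $\KL(\nu_n^N \,|\, \pi) < \epsilon$.

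There is no real obstacle: the argument is essentially a one-line specialization of \Cref{thm:conv_estimate_strong} plus elementary arithmetic. The only points worth a remark are that the contraction factor $1 - \frac{m\tau_\epsilon}{2}$ is genuinely positive (so the monotonicity condition holds and the bound is not vacuous) and that $W_2^2(\mu Q^{N}_{\tau} S^G_{\tau_\epsilon}, \pi)$ is finite, which holds since $\mu, \pi \in \Pc_2(\R^d)$ (the latter by \Cref{lem:F_diff_KL}) and each of the kernels $S^G$, $S^F$, $T$ maps $\Pc_2(\R^d)$ into itself.
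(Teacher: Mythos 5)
Your proposal is correct and is exactly the argument the paper intends: the corollary is an immediate specialization of \Cref{thm:conv_estimate_strong} to $\lambda_k=1$, $\tau_k=\tau_\epsilon$, with the two resulting terms each bounded by $\epsilon/2$ via the stated conditions on $\tau_\epsilon$ and $n_\epsilon$. Your verification of the weight monotonicity hypothesis and the positivity of $1-\tfrac{m\tau_\epsilon}{2}$ is a welcome extra check, but the route is the same as the paper's.
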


\section{Gradient-subgradient Langevin Sampling}\label{sec:grad_subgrad}
Since under \Cref{ass:strong} $F$ is differentiable, instead of a proximal step, an explicit gradient step for $F$ can also be employed. This leads to the gradient-subgradient Langevin algorithm (Grad-sub) depicted in \Cref{algo:grad_subgrad} which exhibits the same convergence behavior as Prox-sub, see \Cref{thm:grad_w2,cor:grad_w2}.
\begin{algorithm}
\setstretch{1.15}
\caption{Gradient-subgradient Langevin Algorithm (Grad-sub)}\label{algo:grad_subgrad}
\textbf{Input:} Initialization $X_0$, number of iterations $K$, step sizes $(\tau_k)_{k=0}^K$, $\tau_k>0$. \\ \vspace*{-\baselineskip}
\begin{algorithmic}[1]
\FOR{$k=0,1,2,\dots,K-1$}
\STATE Pick $Y_{k+1} \in \partial G(KX_k)$
\STATE $X_{k+\frac{1}{2}} = X_k - \tau_{k} K^* Y_{k+1}$
\STATE $X_{k+1} = X_{k+\frac{1}{2}} - \tau_{k+1}\nabla F(X_{k+\frac{1}{2}}) + \sqrt{2\tau_{k+1}}B_{k+1}$
\ENDFOR
\end{algorithmic}
\textbf{Output:} $X_K$.
\end{algorithm}

The Markov transition kernel corresponding to the explicit gradient step reads as
\[\bar{S}^F_{\tau}(x,A) = \delta_{x - \tau \nabla F(x)}(A)\]
and we denote the resulting transition kernel resembling \Cref{algo:grad_subgrad} as $\bar R_{\tilde{\tau},\tau}=S^G_{\tilde{\tau}}\bar{S}^F_\tau T_\tau$. For the explicit step on $F$, \cite[Lemma 4]{durmus2019analysis} yields for all $\tau\leq L_{\nabla F}^{-1}$ and $m\geq 0$
\[2\tau \left\{\Ec_F(\mu \bar{S}^F_\tau) - \Ec_F(\nu)\right\}
\leq (1- m\tau) W^2_2(\mu,\nu) - W^2_2(\mu \bar  S_\tau^F, \nu).\]
Replacing the estimate within \Cref{lem:KL_ineq_strong} leads to the following result.
\begin{proposition}\label{prop:grad_sub_eq}
Let \Cref{ass:strong} be satisfied with $m\geq 0$ and $\tau\leq L_{\nabla F}^{-1}$. Then for any $\mu \in \Pc_2(\R^d)$
\begin{equation}
\begin{aligned}
2\tau\left\{ \Fc(\mu \bar R_{\tilde{\tau},\tau})-\Fc(\pi)\right\} \leq
(1 - m\tau) W^2_2(\mu S^G_{\tilde{\tau}},\pi) - W^2_2(\mu \bar R_{\tilde{\tau},\tau} S^G_{\tau},\pi)\\ + \tau^2 L_G^2\|K\|^2 + 2 L_{\nabla F} d\tau^2.
\end{aligned}
\end{equation}
\end{proposition}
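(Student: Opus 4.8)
The plan is to follow the proof of \Cref{lem:KL_ineq_general} (equivalently \Cref{lem:KL_ineq_strong}) essentially verbatim, with two substitutions: replace the proximal kernel $S^F_\tau$ by the explicit gradient kernel $\bar{S}^F_\tau$, and replace the proximal estimate of \Cref{lem:SF_general}/\Cref{lem:S1_strong} by the explicit-step estimate $2\tau\{\Ec_F(\mu\bar{S}^F_\tau) - \Ec_F(\nu)\} \leq (1-m\tau)W^2_2(\mu,\nu) - W^2_2(\mu\bar{S}^F_\tau,\nu)$ quoted from \cite[Lemma 4]{durmus2019analysis}, which is available precisely because $\tau \leq L_{\nabla F}^{-1}$ and $m \geq 0$. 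Since under \Cref{ass:strong} the function $F$ is differentiable with $L_{\nabla F}$-Lipschitz gradient and $G$ is convex and $L_G$-Lipschitz, all the auxiliary lemmas (\Cref{lem:operator_T}, \Cref{lem:subgrad}, \Cref{lem:H_inequality}) apply, and by \Cref{lem:F_diff_KL} we have $\Hc(\pi) < \infty$, so \Cref{lem:H_inequality} may be used with second argument $\pi$.

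Concretely, I would start from the telescoping decomposition of the free energy, now written in terms of $\bar{R}_{\tilde{\tau},\tau} = S^G_{\tilde{\tau}}\bar{S}^F_\tau T_\tau$ and $\Fc = \Hc + \Ec_F + \Ec_{G\circ K}$:
\begin{align*}
\Fc(\mu\bar{R}_{\tilde{\tau},\tau}) - \Fc(\pi)
&= \Ec_F(\mu S^G_{\tilde{\tau}}\bar{S}^F_\tau T_\tau) - \Ec_F(\mu S^G_{\tilde{\tau}}\bar{S}^F_\tau)
+ \Ec_F(\mu S^G_{\tilde{\tau}}\bar{S}^F_\tau) - \Ec_F(\pi) \\
&\quad + \Ec_{G\circ K}(\mu S^G_{\tilde{\tau}}\bar{S}^F_\tau T_\tau) - \Ec_{G\circ K}(\pi)
+ \Hc(\mu S^G_{\tilde{\tau}}\bar{S}^F_\tau T_\tau) - \Hc(\pi).
\end{align*}
Then I would bound the four groups of terms (each multiplied by $2\tau$) separately: the first by \Cref{lem:operator_T} (differentiable case) applied to $\mu S^G_{\tilde{\tau}}\bar{S}^F_\tau$, giving $\le 2L_{\nabla F}d\tau^2$; the second by \cite[Lemma 4]{durmus2019analysis} with first argument $\mu S^G_{\tilde{\tau}}$ and second argument $\pi$, giving $\le (1-m\tau)W^2_2(\mu S^G_{\tilde{\tau}},\pi) - W^2_2(\mu S^G_{\tilde{\tau}}\bar{S}^F_\tau,\pi)$; the third by \Cref{lem:subgrad} with first argument $\mu\bar{R}_{\tilde{\tau},\tau}$ and second argument $\pi$, giving $\le W^2_2(\mu\bar{R}_{\tilde{\tau},\tau},\pi) - W^2_2(\mu\bar{R}_{\tilde{\tau},\tau}S^G_\tau,\pi) + \tau^2 L_G^2\|K\|^2$; and the fourth by \Cref{lem:H_inequality} applied to $\mu S^G_{\tilde{\tau}}\bar{S}^F_\tau$ (noting $\mu\bar{R}_{\tilde{\tau},\tau} = (\mu S^G_{\tilde{\tau}}\bar{S}^F_\tau)T_\tau$), giving $\le W^2_2(\mu S^G_{\tilde{\tau}}\bar{S}^F_\tau,\pi) - W^2_2(\mu\bar{R}_{\tilde{\tau},\tau},\pi)$. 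Adding the four inequalities, the intermediate Wasserstein terms $W^2_2(\mu S^G_{\tilde{\tau}}\bar{S}^F_\tau,\pi)$ and $W^2_2(\mu\bar{R}_{\tilde{\tau},\tau},\pi)$ cancel, leaving exactly
\begin{align*}
2\tau\{\Fc(\mu\bar{R}_{\tilde{\tau},\tau}) - \Fc(\pi)\}
&\leq (1-m\tau)W^2_2(\mu S^G_{\tilde{\tau}},\pi) - W^2_2(\mu\bar{R}_{\tilde{\tau},\tau}S^G_\tau,\pi) \\
&\quad + \tau^2 L_G^2\|K\|^2 + 2L_{\nabla F}d\tau^2,
\end{align*}
which is the claim.

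I do not expect any genuine obstacle here; the argument is mechanical bookkeeping. The only points requiring minor care are: (i) applying each auxiliary lemma to the \emph{correct} intermediate measure so that the Wasserstein terms telescope cleanly — in particular, it is the identity $\mu\bar{R}_{\tilde{\tau},\tau} = (\mu S^G_{\tilde{\tau}}\bar{S}^F_\tau)T_\tau$ that makes \Cref{lem:operator_T} and \Cref{lem:H_inequality} line up; (ii) the hypothesis $\tau \leq L_{\nabla F}^{-1}$ is exactly what \cite[Lemma 4]{durmus2019analysis} needs for the explicit step, in contrast to the proximal step used in \Cref{lem:SF_general}, which required no upper bound on $\tau$; and (iii) only $m \geq 0$ is used, so strong convexity is not needed for the inequality itself — a positive $m$ merely sharpens the contraction factor $1 - m\tau$.
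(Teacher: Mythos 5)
Your proof is correct and matches the paper's approach exactly: the paper itself gives no written-out proof but simply states that replacing the proximal estimate in \Cref{lem:KL_ineq_strong} by the explicit-gradient estimate from \cite[Lemma 4]{durmus2019analysis} yields the result, and your decomposition, choice of intermediate measures, and cancellation of the Wasserstein terms are precisely the bookkeeping that claim compresses. Your side remarks (the role of $\tau \leq L_{\nabla F}^{-1}$ and the fact that only $m \geq 0$ is needed) are also consistent with the paper's own remark following \Cref{cor:grad_w2}.
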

Thus, denoting  $\bar Q^k_{\tau} = \bar R_{\tau_1,\tau_2}\bar R_{\tau_2,\tau_3}\dots \bar R_{\tau_k, \tau_{k+1}}$ similar as before, \Cref{thm:conv_estimate_general} holds again and, the results in \Cref{sec:convergence_strong} hold with the modification that the factor $(1-\frac{m\tau}{2})$ is replaced by the slightly better $(1-m\tau)$.
\begin{theorem}\label{thm:grad_w2}
Let \Cref{ass:strong} be satisfied and select $(\tau_j)_j$ such that $\tau_j\leq L_{\nabla F}^{-1}$ for all $j$. Then for any $\mu \in \Pc_2(\R^d)$
\begin{equation}
\begin{aligned}
W^2_2(\mu \bar Q^k_{\tau} S^G_{\tau_{k+1}},\pi)\leq \prod\limits_{j=2}^{k+1}(1 - m\tau_j) W^2_2(\mu S^G_{\tau_1},\pi)  \\
+ \sum\limits_{j=2}^{k+1} \left(2 L_{\nabla F} d + L_G^2\|K\|^2\right) \tau_{j}^2 \prod\limits_{i=j+1}^{k+1}(1 - m\tau_i).
\end{aligned}
\end{equation}
In particular, for constant $\tau_k=\tau$ it holds true that
\begin{equation}
\begin{aligned}
W^2_2(\mu\bar  Q^k_{\tau} S^G_{\tau},\pi)\leq
(1 - m\tau)^k W^2_2(\mu S^G_{\tau},\pi)
+ \left(2 L_{\nabla F} d + L_G^2\|K\|^2\right)\frac{\tau}{m}
\end{aligned}
\end{equation}
\end{theorem}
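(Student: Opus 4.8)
The plan is to mirror exactly the argument used for the proximal-subgradient case in \Cref{thm:convergence_strong}, but starting from the estimate in \Cref{prop:grad_sub_eq} rather than \Cref{lem:KL_ineq_strong}. First I would set $\mu\mapsto \mu\bar Q^{k-1}_\tau$, $\tilde\tau=\tau_k$, $\tau=\tau_{k+1}$ in \Cref{prop:grad_sub_eq}. Since $\Fc(\mu\bar R_{\tau_k,\tau_{k+1}})-\Fc(\pi)=\KL(\mu\bar Q^k_\tau\,|\,\pi)\geq 0$ by \Cref{lem:F_diff_KL}, the left-hand side is non-negative and can be dropped, yielding the one-step recursion
\begin{equation*}
W^2_2(\mu\bar Q^k_\tau S^G_{\tau_{k+1}},\pi)\leq (1-m\tau_{k+1})\,W^2_2(\mu\bar Q^{k-1}_\tau S^G_{\tau_k},\pi) + \left(2L_{\nabla F}d + L_G^2\|K\|^2\right)\tau_{k+1}^2,
\end{equation*}
valid since $\tau_{k+1}\leq L_{\nabla F}^{-1}$. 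Here I am using the convention $\bar Q^0_\tau=\mathrm{id}$, so that for $k=1$ the right-hand side reads $(1-m\tau_1)W^2_2(\mu S^G_{\tau_1},\pi)+(\cdots)\tau_1^2$, which seeds the induction.

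Next I would unroll this recursion in $k$. Iterating $k$ times and telescoping the product of contraction factors gives
\begin{equation*}
W^2_2(\mu\bar Q^k_\tau S^G_{\tau_{k+1}},\pi)\leq \prod_{j=2}^{k+1}(1-m\tau_j)\,W^2_2(\mu S^G_{\tau_1},\pi) + \left(2L_{\nabla F}d+L_G^2\|K\|^2\right)\sum_{j=2}^{k+1}\tau_j^2\prod_{i=j+1}^{k+1}(1-m\tau_i),
\end{equation*}
which is the first assertion. One small point to keep straight is the index bookkeeping: the step size appearing in the $j$-th application of the kernel is $\tau_{j}$ on the $S^G$ side and $\tau_{j}$ again matching with the next kernel, so the factors collected are exactly $(1-m\tau_2),\dots,(1-m\tau_{k+1})$, and the noise term from the application producing $\bar Q^j_\tau$ is multiplied by the contractions from the subsequent applications $j+1,\dots,k+1$, hence the inner product $\prod_{i=j+1}^{k+1}$.

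For the constant step size case $\tau_j=\tau$, the products become powers: $\prod_{j=2}^{k+1}(1-m\tau)=(1-m\tau)^k$ and $\prod_{i=j+1}^{k+1}(1-m\tau)=(1-m\tau)^{k-j+1}$, so the sum is a geometric series $\sum_{j=2}^{k+1}(1-m\tau)^{k-j+1}=\sum_{\ell=0}^{k-1}(1-m\tau)^\ell = \frac{1-(1-m\tau)^k}{m\tau}$. Multiplying by $\tau^2$ gives $\tau\frac{1-(1-m\tau)^k}{m}\leq \frac{\tau}{m}$, which yields the claimed bound $(1-m\tau)^kW^2_2(\mu S^G_\tau,\pi)+\left(2L_{\nabla F}d+L_G^2\|K\|^2\right)\frac{\tau}{m}$. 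There is no real obstacle here — the proof is entirely parallel to \Cref{thm:convergence_strong}; the only thing requiring care is that the geometric series manipulation needs $0<1-m\tau<1$, which holds because $m\leq L_{\nabla F}$ (strong convexity plus Lipschitz gradient, as noted in the remark after \Cref{lem:S1_strong}) forces $m\tau\leq L_{\nabla F}\tau\leq 1$, and strictly $m\tau<1$ unless $m=L_{\nabla F}=1/\tau$, in which case the bound holds trivially with the geometric sum interpreted as $k$. The main "difficulty", such as it is, lies upstream in \Cref{prop:grad_sub_eq}, whose proof is just the proof of \Cref{lem:KL_ineq_strong} with \cite[Lemma 4]{durmus2019analysis} substituted for \Cref{lem:S1_strong}; given that proposition, the present theorem is a routine induction.
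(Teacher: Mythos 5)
Your proposal is correct and follows exactly the route the paper intends: the paper gives no separate proof of this theorem but states that it follows from \Cref{prop:grad_sub_eq} by repeating the argument of \Cref{thm:convergence_strong} with $(1-\frac{m\tau}{2})$ replaced by $(1-m\tau)$, which is precisely your recursion, unrolling, and geometric-series computation. The only blemish is a harmless index slip in your base case (for $k=1$ the recursion produces the factor $(1-m\tau_2)$ and the term $\tau_2^2$, not $\tau_1$), which is consistent with the unrolled formula you then state.
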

\begin{corollary}\label{cor:grad_w2}
Let \Cref{ass:strong} hold  and pick $\tau_k=\tau_\epsilon$ for all $k$ and $n_\epsilon$ such that
\begin{equation}\label{eq:stepsize_grad}
\tau_\epsilon<\min\left\{\frac{m\epsilon}{2}\left(2 L_{\nabla F} d + L_G^2\|K\|^2\right)^{-1},L_{\nabla F}^{-1}\right\},\quad n_\epsilon>\frac{\log\left(\frac{\epsilon}{2W^2_2(\mu \bar S^G_{\tau},\pi)}\right)}{\log\left(1 - m\tau_\epsilon\right)}.
\end{equation}
Then for any $n\geq n_\epsilon$, it holds $W^2_2(\mu \bar Q^n_{\tau}  S^G_{\tau},\pi)<\epsilon$.
\end{corollary}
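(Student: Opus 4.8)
The plan is to read off the constant-step-size bound from \Cref{thm:grad_w2}, namely
\[
W^2_2(\mu \bar Q^n_\tau S^G_\tau,\pi)\;\le\;(1-m\tau)^n\,W^2_2(\mu S^G_\tau,\pi)\;+\;\left(2L_{\nabla F}d+L_G^2\|K\|^2\right)\frac{\tau}{m},
\]
specialized to $\tau=\tau_\epsilon$, and then to show that under the stated choices each of the two summands is strictly below $\epsilon/2$. Everything reduces to elementary arithmetic, so the proof will simply spell out the two rearrangements.

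For the constant (second) summand, the condition $\tau_\epsilon<\frac{m\epsilon}{2}\left(2L_{\nabla F}d+L_G^2\|K\|^2\right)^{-1}$ is exactly the rearrangement of $\left(2L_{\nabla F}d+L_G^2\|K\|^2\right)\frac{\tau_\epsilon}{m}<\frac{\epsilon}{2}$, so no further work is needed there. For the geometric (first) summand, I would first note that $\tau_\epsilon<L_{\nabla F}^{-1}$ together with $m\le L_{\nabla F}$ — the inequality recorded in the remark following \Cref{lem:S1_strong} — gives $0<1-m\tau_\epsilon<1$, hence $\log(1-m\tau_\epsilon)<0$ and $(1-m\tau_\epsilon)^n\to 0$. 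If $W^2_2(\mu S^G_{\tau_\epsilon},\pi)=0$ there is nothing to prove; otherwise the requirement $(1-m\tau_\epsilon)^n W^2_2(\mu S^G_{\tau_\epsilon},\pi)<\frac{\epsilon}{2}$ is equivalent, after taking logarithms and dividing by the negative number $\log(1-m\tau_\epsilon)$ (which reverses the inequality), to $n>\log\!\left(\frac{\epsilon}{2W^2_2(\mu S^G_{\tau_\epsilon},\pi)}\right)\big/\log(1-m\tau_\epsilon)$, i.e.\ to $n>n_\epsilon$. Thus for every $n\ge n_\epsilon$ both summands lie below $\epsilon/2$ and the claim follows; in the degenerate case $\epsilon\ge 2W^2_2(\mu S^G_{\tau_\epsilon},\pi)$ the right-hand side defining $n_\epsilon$ is nonpositive, so any $n\ge 1$ works.

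I do not expect a genuine obstacle: the statement is a direct consequence of \Cref{thm:grad_w2}, mirroring \Cref{cor:prox_grad_w2} with the contraction factor $(1-\tfrac{m\tau}{2})$ replaced by the sharper $(1-m\tau)$. The only two points meriting an explicit line are that $1-m\tau_\epsilon$ is positive (so that the logarithm is defined and the geometric term actually decays) and that dividing through by $\log(1-m\tau_\epsilon)<0$ flips the inequality, which is precisely why the bound on $n_\epsilon$ appears with that logarithm in the denominator.
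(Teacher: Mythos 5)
Your proposal is correct and is exactly the argument the paper leaves implicit: the corollary is stated as an immediate consequence of the constant-step-size bound in \Cref{thm:grad_w2}, with the step-size condition controlling the bias term by $\epsilon/2$ and the condition on $n_\epsilon$ controlling the geometric term by $\epsilon/2$. Your added remarks — that $m\le L_{\nabla F}$ guarantees $0<1-m\tau_\epsilon<1$, and that dividing by the negative $\log(1-m\tau_\epsilon)$ flips the inequality — are the right points to make explicit, and your reading of $W^2_2(\mu \bar S^G_{\tau},\pi)$ as $W^2_2(\mu S^G_{\tau},\pi)$ is the intended one.
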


\begin{remark}
Note that \Cref{prop:grad_sub_eq} holds also for $m=0$ in which case we recover again, analogous results as for the Prox-sub algorithm.
\end{remark}

\section{Numerical Experiments}\label{sec:experiments}
In this section we provide numerical results obtained with the proposed methods in order to verify the proven convergence rates and to show the practical feasibility of the algorithms. The source code to reproduce the experiments of this paper is available at \cite{habring2023_git}. The experimental section is split into two parts. First we consider sampling from two-dimensional densities, which enables us to confirm the convergence rates in the space of measures with respect to the Wasserstein distance and the KL divergence, as these distances can be estimated well in the low-dimensional setting. Despite being computationally less challenging, the two-dimensional setting still allows us to incorporate a non-trivial linear operator $K$ so that we can cover all use-cases of interest with these experiments. Afterwards we consider experiments from the field of mathematical image processing, namely image denoising and deconvolution. Due to the high dimensionality of imaging problems, it is computationally not feasible to approximate Wasserstein distances or KL divergences between samples accurately, which is why we restrict ourselves to comparing estimated expected values based on the generated samples in these experiments.

\subsection{Example in $\R^2$}
We consider the functional $G(Kx)$ to be the total variation functional in which, in the 2D setting, corresponds to $G:\R\rightarrow\R$ and the operator $K:\R^2\rightarrow\R$ being defined as $G(p) = \lambda |p|$, $Kx = x_2-x_1$ with $\lambda>0$ a scaling parameter. One can easily check that $G$ is Lipschitz continuous with Lipschitz constant $L_G=\lambda$ and its subdifferential reads as $\partial G(p)=[-\lambda,\lambda]$ for $p=0$ and $\partial G (p) = \{ \text{sign}(p)\lambda\}$ else. Moreover, the operator $K$ satisfies  $\|K\|^2 \leq 2$. For the functional $F$ we will consider two choices in the following.

\subsubsection{$\TV-L^2$ Sampling}\label{sec:tv-l2-2d}
In order to provide a differentiable example lading to strong convergence rates as shown in \Cref{sec:convergence_strong} and \Cref{sec:grad_subgrad} we consider $F$ to be the squared $L^2$ discrepancy, i.e., $F:\R^2\rightarrow \R$ is defined as $F(x) = \frac{1}{2\sigma^2}\|x-y\|_2^2$. (Note that here we use $\|\cdot \|_2$ instead of just $\|\cdot \|$ to highlight the difference to the $1$-norm used below.) In this setting, $F$ has Lipschitz continuous gradient with Lipschitz constant $L_{\nabla F}=\sigma^{-2}$ and is $m$-strongly convex with parameter $m=\sigma^{-2}$. We choose $y=(-1,1)$, $\sigma=1$, and $\lambda=5$. We compare the results of Prox-sub, \Cref{algo:prox_subgrad}, and Grad-sub, \Cref{algo:grad_subgrad}. The proximal mapping of $F$ reads as ${\prox}_{\tau F}(x) = (1+\frac{\tau}{\sigma^2})^{-1}(x+\frac{\tau}{\sigma^2}y)$. Moreover, we find $\frac{m}{2L_{\nabla F}^2-m^2} = \frac{1}{L_{\nabla F}}=\sigma^2=1$ as the upper bound for the step size for both Grad-sub and Prox-sub.

We conducted experiments with the step sizes $\tau=1e-3, 1e-4, 1e-5$. In order to be able to approximate the distributions of iterates of the algorithms, we need i.i.d. samples of each iterate, thus, for each algorithm we compute 1e4 independent Markov chains and then estimate the distribution of the $k$-th iterate as a histogram based on these samples. Note that this increases the computational effort of the method and such a computation of parallel Markov chains could be avoided by establishing ergodicity of the proposed method. However, in the low-dimensional case vectorizing the computation of independent chains leads to a reasonably fast method and we refer the reader to the computation times listed in \Cref{table:computation_times_2d} for more details. To obtain a ground truth target to compare our results to we compute the normalization constant $Z = \int \exp(-F(x)-G(Kx))\;\d x$ approximately by numerical integration. Then we generate a piecewise constant approximation of the target density using the same bins as for the histogram of the samples. Afterwards we compute the Wasserstein distance between the two approximate distributions using Python's POT package \cite{flamary2021pot} and the KL divergence as well as the TV norm using their explicit formulas. (Note that the terminology \emph{total variation}, respectively TV, is used to describe two different things: Once, a norm on the space of (signed) measures allowing to quantify convergence and once a functional on the space of images.) This way we do not rely on any sampling procedure for estimating a ground truth. 

The convergence rates in Wasserstein distance are depicted in \Cref{fig:2d_strong_rates}. For all choices of $\tau$ the numerical convergence rates clearly exceed the expected ones. Moreover, the experimental results indicate that there is almost no difference between the convergence rates obtained with Prox-sub and Grad-sub despite the theory predicting slower convergence for Prox-sub. This is most likely due to the fact that, in the limit for vanishing step sizes, the explicit gradient step and the proximal step, which is in fact an implicit gradient step, are equivalent.%

\begin{figure}
\centering
\includegraphics[scale = 0.2]{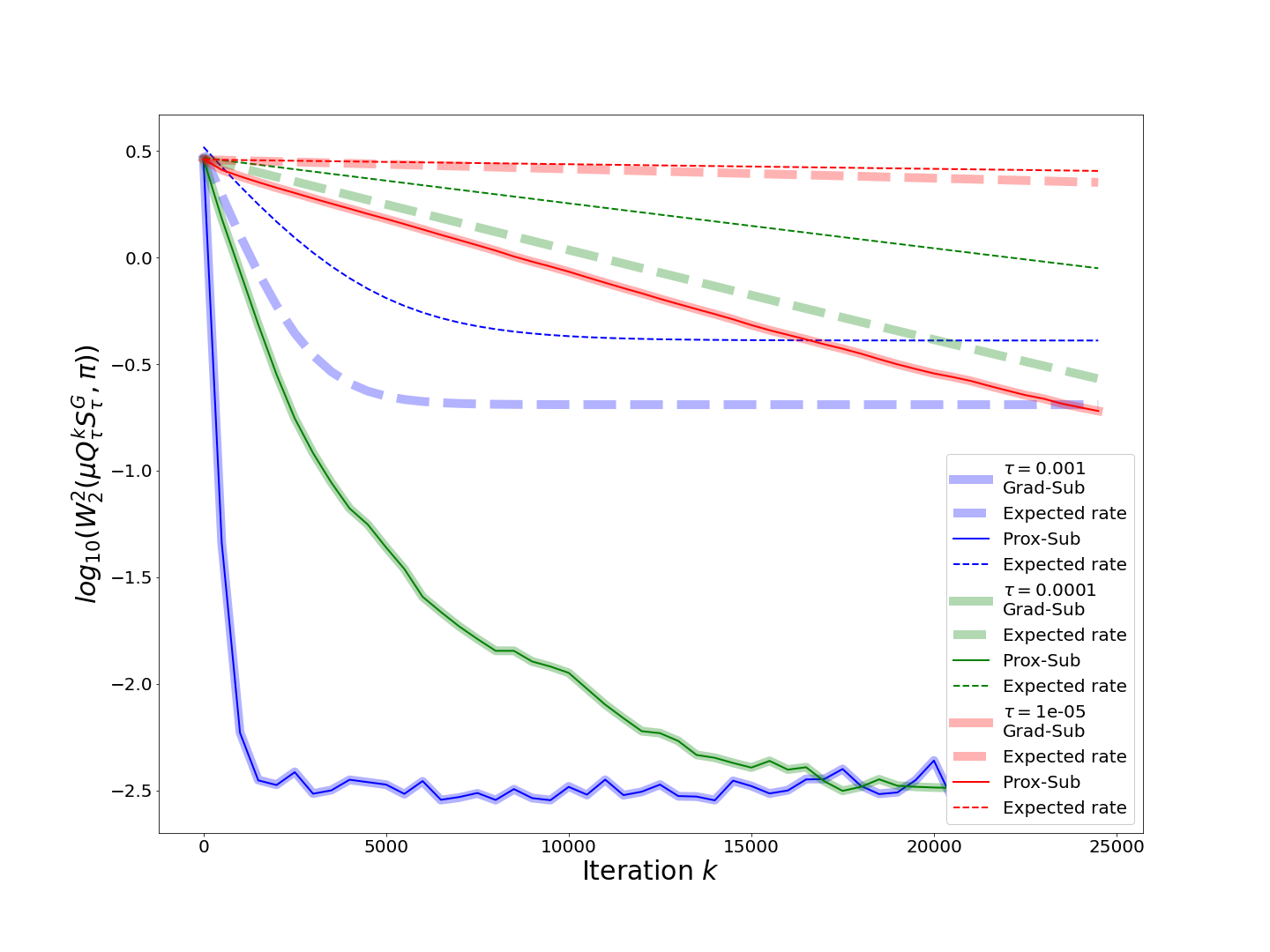}
\caption{Comparison of Grad-sub and Prox-sub for $\TV-L^2$ sampling in Wasserstein distance with $\sigma=1$, $\lambda=5$, $\tau=1e-3, 1e-4, 1e-5$. The color indicates the step size $\tau$, dashed lines the convergence rates obtained in the theory and solid lines the empirical ones, thick transparent lines the results from Grad-sub and thin lines the ones from Prox-sub.}
\label{fig:2d_strong_rates}
\end{figure}

Additionally, we also verify the convergence proven in \Cref{thm:conv_estimate_general} for the average distributions $\nu_k^N$ as defined in \eqref{eq:running_average}. This convergence holds identically for Grad-sub. We use no burn-in phase, that is $N=0$. The results are shown in \Cref{fig:2d_strong_rates_KL}. Again, we note that the proven convergence rates are clearly exceeded with Prox-sub and Grad-sub performing almost identically.
\begin{figure}
\centering
\includegraphics[scale = 0.2]{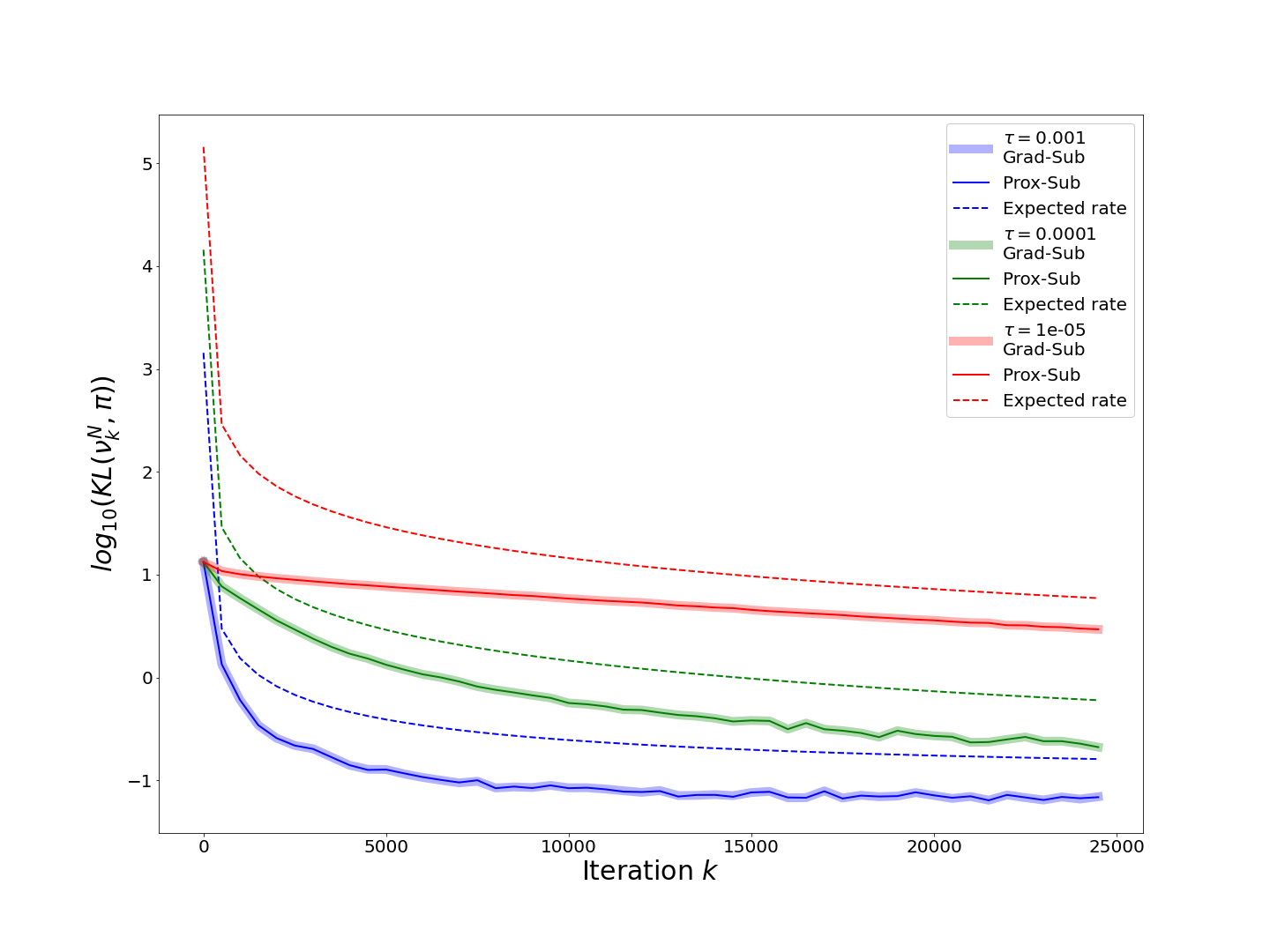}
\caption{Comparison of Grad-sub and Prox-sub for $\TV-L^2$ sampling in KL-divergence with $\sigma=1$, $\lambda=5$, $\tau=1e-3, 1e-4, 1e-5$. The color indicates the step size $\tau$, thick transparent lines the results from Grad-sub and thin lines the ones from Prox-sub. Dashed lines show the convergence rates obtained in the theory which are identical for both methods in this case.}
\label{fig:2d_strong_rates_KL}
\end{figure}

In \Cref{fig:rates_MALA_MYULA_TV_L2} we compare the results of Prox-sub to the ones obtained with P-MALA \cite{pereyra2016proximal} and MYULA \cite{durmus2022proximal}. For P-MALA a proposal for the update is computed as
\begin{equation}\label{eq:MALA}
\tag{P-MALA}
X^* = {\prox}_{\tau(F + G\circ K)}(X_k) + \sqrt{2\tau}B_{k+1}.
\end{equation}
Using a Metropolis-Hastings accept/reject step, this update is accepted for $X_{k+1}$ with probability $\min\left\{1,\frac{\pi(X^*)q(X_k|X^*)}{\pi(X_k)q(X^*|X_k)}\right\}$ and otherwise rejected leading to $X_{k+1}=X_k$. Here, $q(x|y) = \mathcal{N}(x;{\prox}_{\tau(F + G\circ K)}(y), 2\tau I_d)$ with $\mathcal{N}(x;\mu,\Sigma)$ being the density of a multivariate Gaussian with mean $\mu$ and covariance matrix $\Sigma$ evaluated at $x$ and $I_d\in\R^{d\times d}$ denoting the unit matrix. One iteration of MYULA, on the other hand, is defined as
\begin{equation}
\tag{MYULA}
X_{k+1} = (1-\frac{\tau}{\theta})X_k - \tau\nabla F(X_k) + \frac{\tau}{\theta}{\prox}_{\theta G\circ K}(X_k) + \sqrt{2\tau} B_{k+1}
\end{equation}
with a Moreau-Yoshida parameter $\theta>0$ and the step size requirement $\tau\leq\frac{\theta}{\theta L_{\nabla F}+1}$. In MYULA, the non-smooth part $G\circ K$ is replaced by its differentiable Moreau envelope
\begin{equation}
(G\circ K)^\theta(x) = \min\limits_z \frac{1}{2\theta}\|z-x\|^2 + G(Kz)
\end{equation}
which approximates $G\circ K$ as $\theta\rightarrow 0$. Applying standard ULA to the potential $U^\theta = F + (G\circ K)^\theta$ leads to the above iteration which approximates the target density in total variation as $\theta\rightarrow 0$. For this experiment we set $\theta=0.01$ yielding favorable convergence results. In \Cref{fig:rates_MALA_MYULA_TV_L2} we find very similar convergence behavior for P-MALA and MYULA. For P-MALA and MYULA convergence is proven in the TV-norm in \cite{durmus2022proximal,pereyra2016proximal}. For Prox-sub convergence in TV is only guaranteed for the average distributions $(\nu_k^N)_k$ which converge slower. However, as we could prove convergence of the iterates $\mu Q^k_\tau S_\tau^G$ directly as well (albeit in Wasserstein distance) we also plot the TV distance of these iterates, which decreases at the same speed as for P-MALA and MYULA.
\begin{figure}
\centering
\includegraphics[scale=0.2]{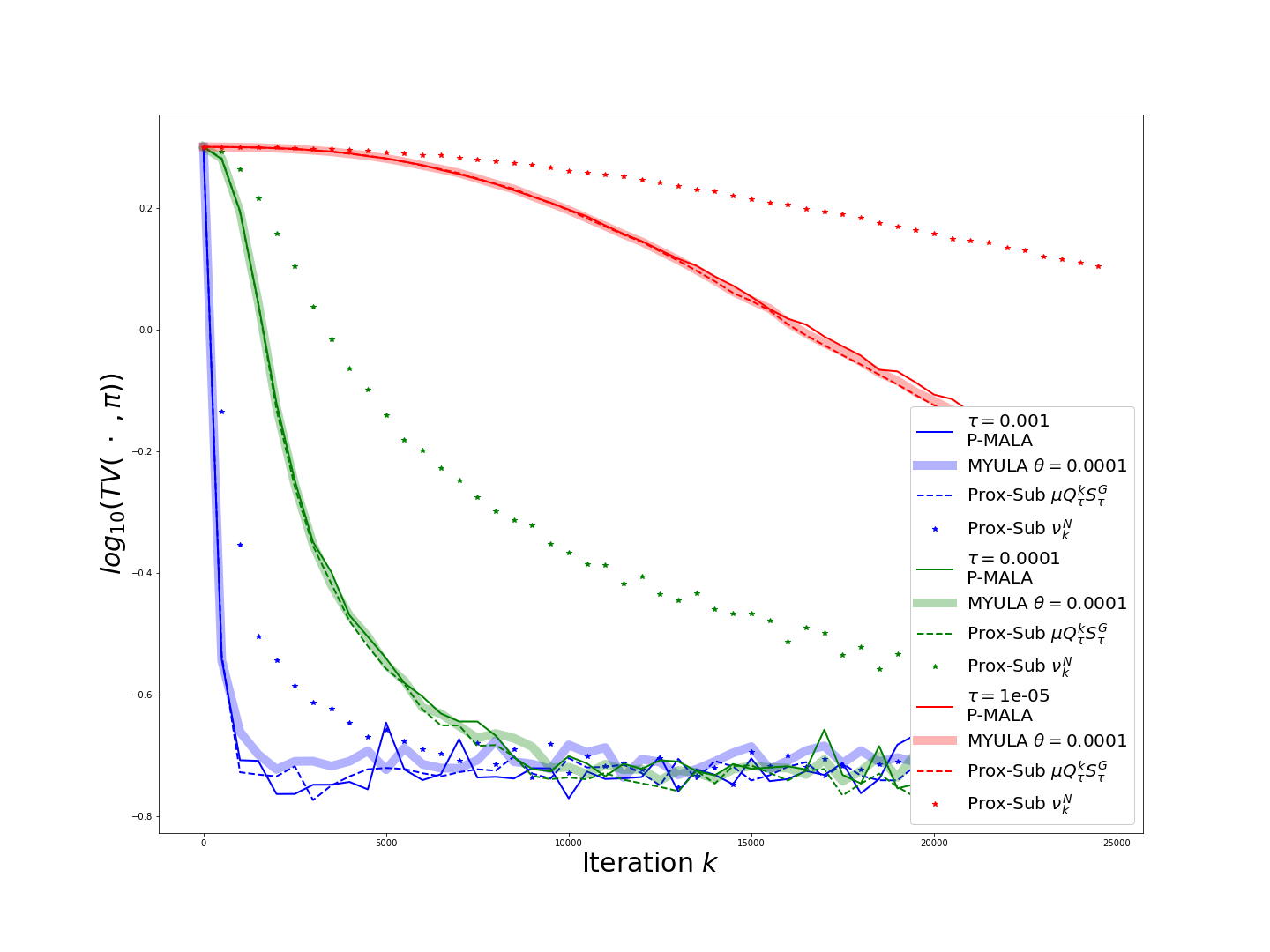}%
\caption{Comparison of Prox-sub, P-MALA \cite{pereyra2016proximal}, and MYULA \cite{durmus2022proximal} for $\TV-L^2$ sampling in TV distance with $\sigma=1$, $\lambda=5$, $\tau=1e-3, 1e-4, 1e-5$. For Prox-sub we show both, the total variation distance of the iterates $(\nu_k^N)_k$ for $N=0$ as well as for $\mu Q^k_\tau S_\tau^G$. For the former convergence is proven in TV whereas for the latter we could prove convergence in Wasserstein distance.}
\label{fig:rates_MALA_MYULA_TV_L2}
\end{figure}

For P-MALA and MYULA iterative computation of the proximal mappings is necessary. We do so by employing the the primal-dual algorithm from \cite{primal_dual_algo} which we terminate when the difference between consecutive iterates is less than $1e-4$ in the maximum norm. This leads to an increased computational burden for these two methods. The computation times are listed in \Cref{table:computation_times_2d} for different step sizes. Interestingly, we find that in particular the computation time of P-MALA is sensitive to the step size. This is due to the fact, that the step size constitutes the prox-parameter in \eqref{eq:MALA} with larger step sizes leading to more iterates within the primal-dual algorithm. For MYULA, on the other hand, the prox parameter is fixed as the hyperparameter $\theta$ which is why the computation times are less sensitive to changes in the step-size. In several cases the proposed methods are even faster computing 1e4 parallel chains than P-MALA and MYULA computing a single chain which means that the total computational burden with the proposed methods is lower despite accounting the lack of ergodicity by running parallel chains instead of using consecutive samples of a single chain for estimation.
\begin{table}[h]
\resizebox{\textwidth}{!}{%
\centering
\begin{tabular}{c|cccc|cccc}
& \multicolumn{4}{c}{1e4 chains} & \multicolumn{4}{c}{single chain}\\
$\tau$ & P-MALA & MYULA & Grad-sub& Prox-sub & P-MALA & MYULA & Grad-sub & Prox-sub\\
 \midrule
$10^{-5}$ & 2.11 & 22.83 & 0.5 & 0.52 & 0.36 & 1.02 & 0.019 & 0.24\\
$10^{-4}$ & 4.85 & 23.46 & 0.5 & 0.52 & 0.49 & 1.66 & 0.019 & 0.24\\
$10^{-3}$ & 35.59 & 23.61 & 0.50 & 0.52 & 1.93 & 1.77 & 0.019 & 0.24
\end{tabular}}
\vspace*{0.2cm}
\caption{Average computation times in seconds for 1000 iterations for different methods and step sizes $\tau$. We show the times for computing only a single Markov chain as well as for computation of 1e4 chains simultaneously in parallel.}
\label{table:computation_times_2d}
\end{table}

Due to $F$ not being Lipschitz continuous, convergence of the subgradient method \cite[Section 4.1]{durmus2019analysis} is not guaranteed. Nonetheless, since $F$ is locally Lipschitz and the iterates are typically bounded in practice, we apply the subgradient method (denoted by Sub in the sequel) as an additional comparison method. Note that for Sub, only convergence of the average distributions $(\nu^N_k)_k$ can be expected, which is why we evaluate convergence of those average distributions. For Prox-sub on the other hand, we show the errors for both the iterates $(\nu_k^N)_k$ as well as $(\mu Q^k_\tau S^G_\tau)_k$ itself, since we can ensure both to converge (albeit the latter in Wasserstein distance instead of KL). We employ the KL divergence between sampled and target distribution of Sub and Prox-sub as common metric to compare the convergence behavior as this is the metric where convergence is ensured for both methods, and also use a burn-in phase of $N=0$ for both methods. The results can be found in  \Cref{fig:2d_l2_subgradient}. Evidently, averaging over successive distributions within the computation of $\nu^N_k$ leads to a slower convergence.

\begin{figure}
\centering
\includegraphics[scale = 0.2]{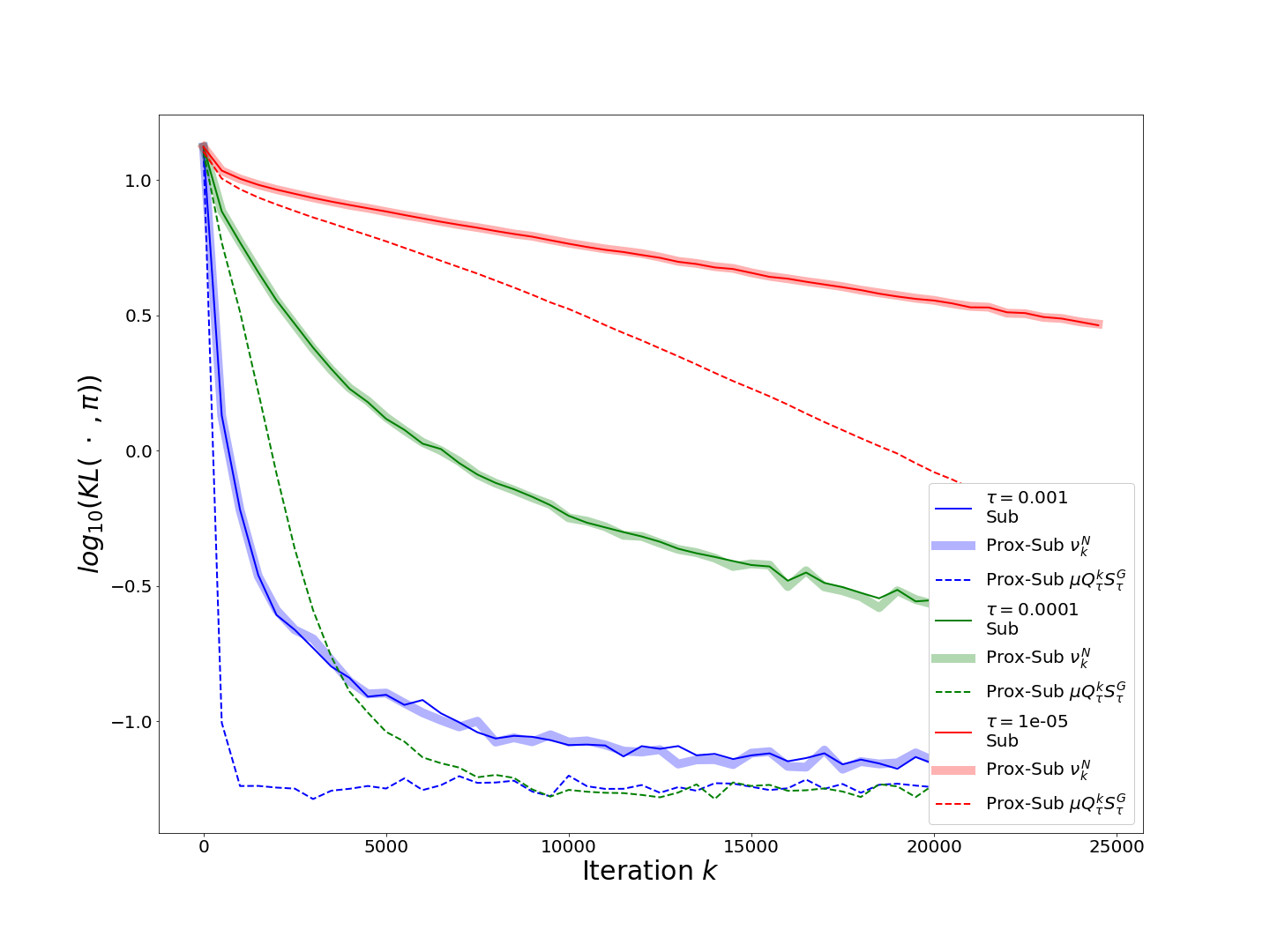}
\caption{Comparison of Prox-sub and Sub for $\TV-L^2$ sampling. For Prox-sub we show the KL divergence of both the iterates $\nu_k^N$ as well as $\mu Q^k_\tau S^G_\tau$, for Sub of the iterates $\nu^N_k$ where the burn-in phase is set to $N=0$.}
\label{fig:2d_l2_subgradient}
\end{figure}

\subsubsection{$\TV-L^1$ Sampling}
The proposed method Prox-sub is also applicable in the case that $F$ is non-differentiable. As an example for this case we consider $F$ to be the $L^1$ norm $F:\R^2\rightarrow \R$ given as $F(x) = \frac{1}{b}\|x-y\|_1$ with $b>0$. Such a model is in particular reasonable working with data corrupted with noise following a Laplace distribution. This choice for $F$ is Lipschitz continuous with Lipschitz constant $L_{F}=b^{-1}\sqrt{2}$. Moreover, the proximal mapping of $F$ in this case reads as ${\prox}_{\tau F} (x) = y + \text{sign}(x-y)\max\{0, |x-y|-\frac{\tau}{b}\}$. As before, we set $y=(-1,1)$, $b=1$, and $\lambda=5$. We compare the results of Prox-sub, and Sub, for which convergence can be ensured in this setting due to Lipschitz continuity of $F$. Results are shown again for the step sizes $\tau=1e-3, 1e-4, 1e-5$. As before an estimated ground truth comparison target is obtained by computing the normalization constant $Z = \int \exp(-F(x)-G(Kx))\;\d x$ by numerical integration and approximating the obtained density by a discretized distribution. We compute again 1e4 parallel Markov chains in this setting. The results are shown in \Cref{fig:2d_weak_rates_KL}. Once again, both methods satisfy the proven convergence rates with barely a noticeable difference in convergence speed in practical experiments. %
\begin{figure}
\centering
\includegraphics[scale = 0.2]{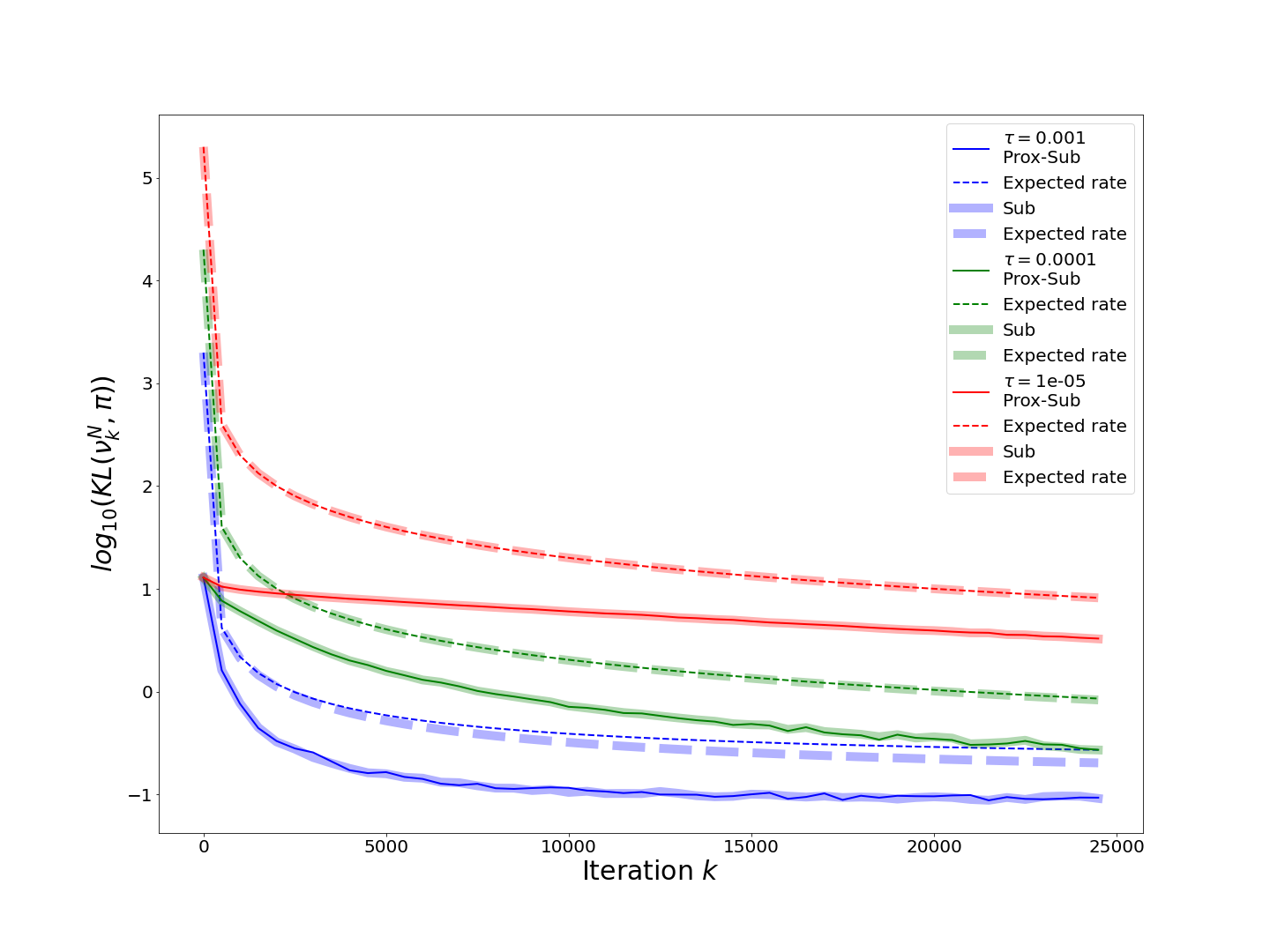}
\caption{Comparison of Prox-sub and Sub for $\TV-L^1$ sampling in KL-divergence with $b=1$, $\lambda=5$, $\tau=1e-3, 1e-4, 1e-5$. The color indicates the step size $\tau$, dashed lines the convergence rates obtained in the theory and solid lines the empirical ones, thick transparent lines the results from Sub and thin lines the ones from Prox-sub.}
\label{fig:2d_weak_rates_KL}
\end{figure}
We do not compare to P-MALA or MYULA as these methods are not particularly suited in this setting due to the necessary iterative computation of the proximal mapping of $F+G\circ K$ for non-differentiable $G$ and $F$. %

\subsection{Imaging Examples}
In this section we show experimental results for applications in mathematical imaging. Since computing Wasserstein distances or KL-divergences between image samples constitutes a significant computational burden due to the high dimensionality, we do not compare entire distributions, but restrict ourselves to comparing expected values within the following experiments. Note that convergence of the error in expected values is a consequence of convergence in Wasserstein 2-distance. Indeed, given an optimal coupling $(X,Y)$ for two distributions $(\mu,\nu)$, using Hölder's inequality, we find 
\[\|\E[X] - \E[Y]\|^2_2 = \|\E[X-Y]\|_2^2\leq \E[\|X-Y\|_2^2] = W^2_2(\mu,\nu).\]
In all imaging experiments, the functional $G(Kx)$ is chosen to be the anisotropic total variation, that is $G:\R^{2\times n\times m}\rightarrow\R$ and $K:\R^{n\times m}\rightarrow\R^{2\times n\times m}$ are defined as $G(p) = \lambda\|p\|_1 = \lambda\sum\limits_{i=1}^n\sum\limits_{j=1}^m|p_{i,j}^1| + |p_{i,j}^2|$ and 
\begin{equation}\label{eq:aniso_TV}
\begin{aligned}
(Kx)_{i,j}^1 = \begin{cases}
x_{i+1,j}-x_{i,j}\quad&\text{if $i<n$}\\
0\quad&\text{else,}
\end{cases}\quad
(Kx)_i^2 = \begin{cases}
x_{i,j+1}-x_{i,j}\quad&\text{if $j<m$}\\
0\quad&\text{else}.
\end{cases}
\end{aligned}
\end{equation}
We emphasize at this point that isotropic TV would be feasible as well. However, we use anisotropic TV since we want to compare our results to the ones obtained with Belief Propagation \cite{pea82,tapfre03,knobelreiter2020belief,szeliski2008comparative}, which is only applicable in the anisotropic case. The subdifferential of $G$ in this setting reads as
\begin{equation}
q\in \partial G(p)\Leftrightarrow \begin{cases} 
q_{i,j}^k = \lambda \quad&\text{if } p_{i,j}^k>0\\
q_{i,j}^k = -\lambda \quad&\text{if } p_{i,j}^k<0\\
q_{i,j}^k \in [-\lambda,\lambda] \quad&\text{if } p_{i,j}^k=0
\end{cases}
\end{equation}

\subsubsection{$\TV-L^2$ Denoising}\label{sec:numerical_im_denoising}
As a first imaging example we consider $\TV-L^2$ denoising. That is, $F:\R^{n\times m}\rightarrow\R$ is defined as $F(x) = \frac{1}{2\sigma^2}\|x-y\|_2^2$ for which the proximal mapping is analogous as in the two-dimensional case. As data $y\in\R^{n\times m}$ we use a ground truth image which is corrupted with additive Gaussian noise with mean zero and standard deviation $\sigma=0.05$. The upper bound for the step size according to our theoretical results is $\sigma^2 = 0.0025$ for both Grad-sub and Prox-sub. The scaling parameter $\lambda$ from \eqref{eq:aniso_TV} is chosen as $\lambda=30$. To obtain estimates of the ground truth for this setting we apply the Belief Propagation (BP) algorithm which yields highly accurate approximations of the marginal distributions of all image pixels, albeit for discrete gray-scale values. Specifically, we split the gray scale range of $[0,1]$ into $100$ evenly spaced values for the BP algorithm. Using the obtained marginal distributions we can compute the expected value as well as the pixel-wise marginal variances. For details on the application of BP to anisotropic $\TV-L^2$ denoising we refer the reader to \cite[Sections 4.3, 5.1]{narnhofer2022posterior} and to the publicly available source code \cite{habring2023_git}. To compute the expected values for the samples generated with the proposed methods we run the algorithm for a burn in phase of $N$ iterations and then compute the mean of samples $N+1,\dots,N+k$, which is denoted as $\bar{x}^N_k$. Thus, in the high-dimensional setting, we additionally decrease the computational effort by avoiding the computation of large amounts of independent Markov chains and instead using successive samples of a single chain for estimation. While a proof of ergodicity of the proposed algorithms is outside the scope of this paper and left for future work, we note that different versions of the Ergodic Theorem state that under some conditions the mean of successive iterates of a Markov chain approximates the expected value of the stationary distribution of this chain despite the samples not being i.i.d. \cite[Section 9.5]{grimmett2020probability}, \cite[Theorem 4.7.4]{robert1999monte}. Similarly to the mean, we use the same samples to compute the pixel-wise empirical variance. For all algorithms we use a burn-in phase of $N=1e6$ iterations and afterwards deploy the following $k=1e5$ samples.

In \Cref{fig:visual_image_denoising} we show results for BP, MYULA, Grad-sub and Prox-sub with a step size of $\tau = 1e-5$. The proximal mapping of $G\circ K$ for MYULA is again computed iteratively using the primal-dual algorithm \cite{primal_dual_algo} which we terminate as soon as the update within the algorithm is less than 1e-4 in the maximum norm. Moreover, for MYULA we choose a Moreau-Yoshida parameter of $\theta=1e-4$ leading a reasonable trade-off of speed and approximation accuracy. While smaller values lead to a better approximation of the target density, they also force us to use smaller step sizes due to the requirement $\tau\leq \frac{\theta}{\theta L_{\nabla F}+1}$ which leads to slower convergence. Moreover, smaller $\theta$ additionally increase the computational burden as they lead to a higher number of iterations within the primal-dual algorithm for computing the prox of $G\circ K$. We do not include numerical results for P-MALA as we could not achieve reasonable convergence. More specifically, in our experiments in order to achieve a reasonable acceptance rate of the Metropolis-Hastings step, the step size had to be chosen excessively small resulting in very slow convergence.

As expected we find high pixel-wise variances at the regions of jump discontinuities which is due to the very nature of the total variation functional penalizing sharp edges. Moreover, in \Cref{fig:convergence_image_denoising} we illustrate the squared $L^2$ error to the BP results for the proposed methods and MYULA for different step sizes. Again, we observe that there is hardly any difference in accuracy between Prox-sub and Grad-sub probably due to the small step sizes. As previously, we see that accuracy increases as the step size decreases. As the remaining bias suffers from the curse of dimensions, the reached accuracy is worse compared to the 2D examples. It seems that MYULA yields a greater bias than the proposed methods. Unfortunately, reducing this bias by decreasing $\theta$ would lead to smaller step sizes for MYULA and, thus, even slower convergence. Note that the accuracy of the reconstructed images compared to the clean images depends mostly on the model of the potential $U$ rather than the performance of the sampling algorithm. That is, a proficient sampling method should produce samples from $\pi(x) \propto \exp(-U(x))$ independent of how well this distribution is suited to acquire estimates of the unknown clean image. Therefore, we do not provide any results regarding the discrepancy of the MMSE estimates to the clean images as they do not provide any relevant information in the context of this research.
\newcommand{\h}{1.6cm}
\begin{figure}
\centering
\begin{subfigure}{\textwidth}
\centering
\includegraphics[height=\h]{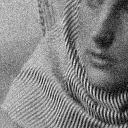}%
\includegraphics[height=\h]{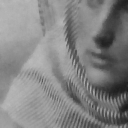}%
\includegraphics[height=\h]{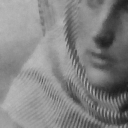}%
\includegraphics[height=\h]{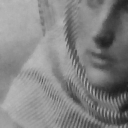}%
\includegraphics[height=\h]{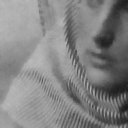}%
\includegraphics[height=\h]{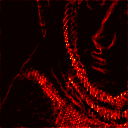}%
\includegraphics[height=\h]{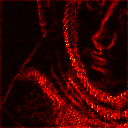}%
\includegraphics[height=\h]{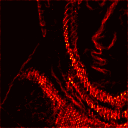}%
\includegraphics[height=\h]{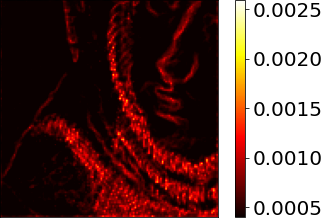}
\end{subfigure}
\begin{subfigure}{\textwidth}
\centering
\includegraphics[height=\h]{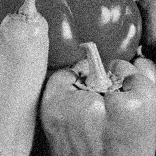}%
\includegraphics[height=\h]{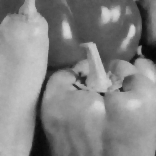}%
\includegraphics[height=\h]{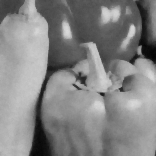}%
\includegraphics[height=\h]{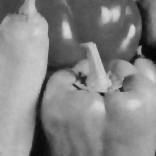}%
\includegraphics[height=\h]{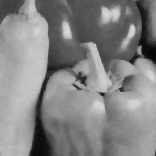}%
\includegraphics[height=\h]{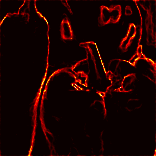}%
\includegraphics[height=\h]{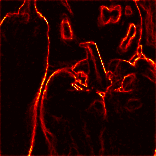}%
\includegraphics[height=\h]{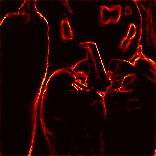}%
\includegraphics[height=\h]{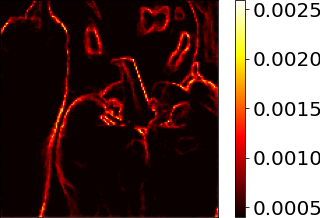}
\end{subfigure}
\caption{Expected values and marginal posterior variances for $\TV-L^2$ denoising computed with different methods. From left to right: Data $y$, expected value computed with BP, MYULA, Grad-sub, Prox-sub, marginal variances computed with BP, MYULA, Grad-sub, Prox-sub. The proposed algorithms were performed with a step size of $\tau=1e-5$, a burn-in phase of $N=1e6$ and the number of samples $k=1e5$. The range of the image pixels was $[0,1]$.}
\label{fig:visual_image_denoising}
\end{figure}
\begin{figure}
\centering
\includegraphics[scale=0.2]{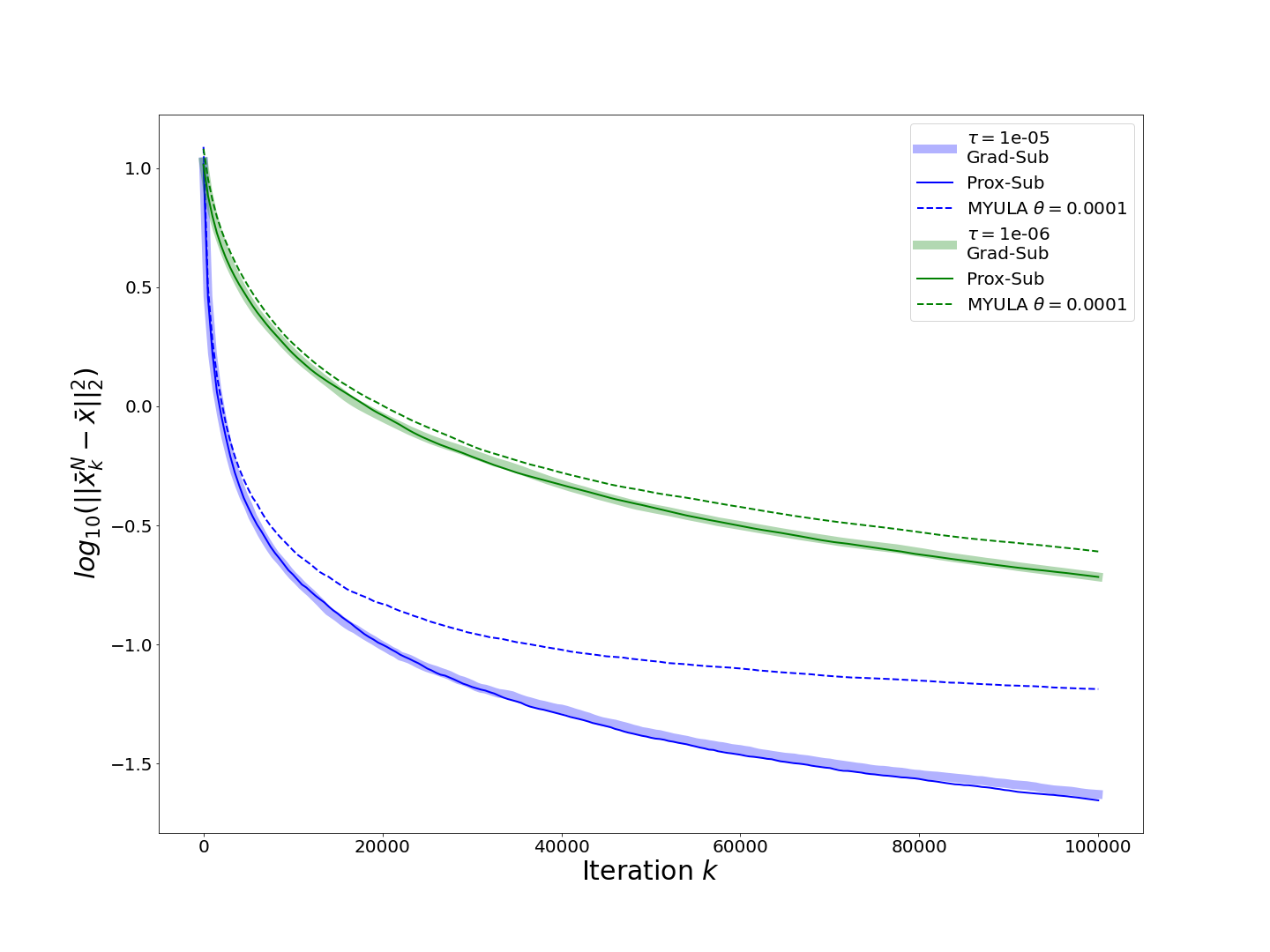}
\caption{Squared $L^2$ error between the estimated expected values and the ones obtained via BP for the peppers image for denoising.}
\label{fig:convergence_image_denoising}
\end{figure}

In \Cref{table:computation_times_image} we provide the average computation times for 1000 iterations of image denoising (and deconvolution) with Grad-sub and Prox-sub compared to MYULA, where we again find that the iterative computation of the proximal mapping within MYULA significantly increases computational effort compared to the proposed method. However, it should be mentioned at this point that the use of consecutive samples is heuristic for Grad-sub and Prox-sub as ergodicity is not yet proven for these methods.

\subsubsection{$\TV-L^2$ Deconvolution}
As a second example in the field of imaging we provide an application including a non-trivial forward operator within the functional $F$, namely image deconvolution. More precisely, $F:\R^{n\times m}\rightarrow\R$ is defined as $F(x) = \frac{1}{2\sigma^2}\|k*x-y\|_2^2$ where $k*x$ denotes the convolution of $x$ with a convolution kernel $k\in\R^{s\times s}$. While the application of Grad-sub does not constitute any difficulties as long as $F$ is differentiable, we require the computation of the proximal mapping of $F$ within Prox-sub. This is possible explicitly for this application since, in Fourier space, the convolution reduces to a pointwise multiplication with the Fourier transform of the convolution kernel, which renders the proximal mapping explicit. More precisely, using the facts that the Fourier transform is linear and unitary we find 
\[{\prox}_{\tau F}(x) = \Fc^{-1}\left((\mathbbm{1}+\frac{\tau}{\sigma^2}\hat{k}\odot\hat{k})^{\circ-1}\odot(\hat{x}+\frac{\tau}{\sigma^2}\hat{k}\odot \hat{y})\right)\]
where $\odot$ denotes element-wise multiplication (Hadamard product), $\mathbbm{1}$ a matrix with all entries set to one, $\hat{k}$ the discrete Fourier transform of $k$ and analogously for $x,y$, $A^{\circ-1}$ the element-wise inverse of a matrix $A$, and $\Fc^{-1}$ the inverse Fourier transform. Note that, the element-wise inverse is guaranteed to exist for $\tau$ small enough. As a convolution kernel $k$ we use a Gaussian kernel of size $s=5$ normalized to integrate to 1. The data y is obtained as the convolution $y = k*u + \sigma n$ with a ground truth image $u$ and $n$ denoting pixel-wise standard Gaussian noise. We choose a noise level $\sigma=0.01$. The scaling parameter $\lambda$ from \eqref{eq:aniso_TV} is chosen as $\lambda=20$. Since BP is not applicable in this setting, as a ground truth we apply Grad-sub with an added Metropolis-Hastings correction step, ensuring convergence to the stationary distribution \cite{roberts1996exponential}, \cite[Chapter 6]{robert1999monte}. Since it is not clear how long convergence to the stationary distribution takes using the Metropolis-Hastings correction, we use a significantly elongated burn-in phase of $N=3e6$ iterations and compute mean and pixel-wise variances using the subsequent $k=1e6$ samples to obtain our estimated ground truth comparison target. Again, we compare the proposed methods to MYULA for which we chose $\theta=1e-4$. Regarding the choice of $\theta$ we refer the reader to the elaborations in \Cref{sec:numerical_im_denoising}. For Prox-sub and Grad-sub we use a burn-in phase of $N=1e6$ iterations and the subsequent $k=5e5$ samples. Visual results are depicted in \Cref{fig:visual_image_deconv} for a step-size of $\tau=1e-6$ and convergence of the estimated expectation is illustrated in \Cref{fig:convergence_image_deconv}, where $\bar{x}$ denotes the the expected value computed using the Metropolis-Hastings correction and $\bar{x}^N_k$ again the estimates based on the proposed methods. Keep in mind, however, that inaccuracies within the estimated ground truth obtained via the Metropolis-Hastings algorithm might additionally distort the accuracy.
\begin{figure}
\centering
\begin{subfigure}{\textwidth}
\centering
\includegraphics[height=\h]{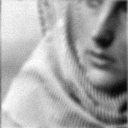}%
\includegraphics[height=\h]{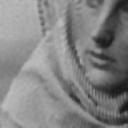}%
\includegraphics[height=\h]{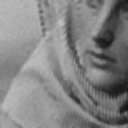}%
\includegraphics[height=\h]{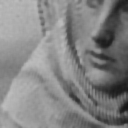}%
\includegraphics[height=\h]{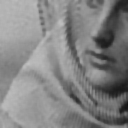}
\includegraphics[height=\h]{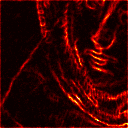}%
\includegraphics[height=\h]{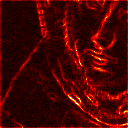}%
\includegraphics[height=\h]{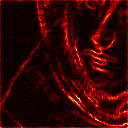}%
\includegraphics[height=\h]{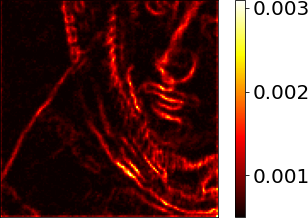}
\end{subfigure}
\begin{subfigure}{\textwidth}
\centering
\includegraphics[height=\h]{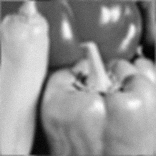}%
\includegraphics[height=\h]{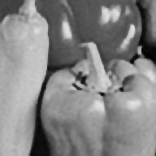}%
\includegraphics[height=\h]{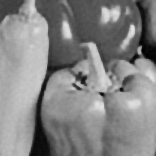}%
\includegraphics[height=\h]{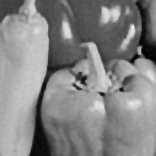}%
\includegraphics[height=\h]{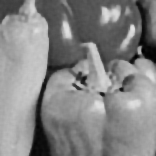}
\includegraphics[height=\h]{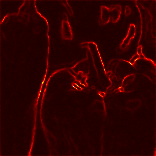}%
\includegraphics[height=\h]{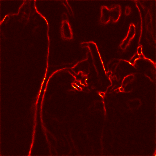}%
\includegraphics[height=\h]{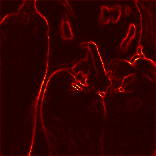}%
\includegraphics[height=\h]{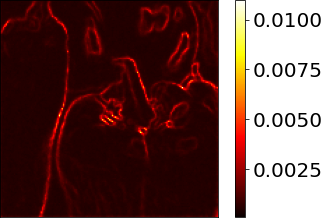}
\end{subfigure}
\caption{Expected values and marginal posterior variances for $\TV-L^2$ deconvolution computed with different methods. From left to right: Data $y$, expected value computed with Metropolis-Hastings correction, MYULA, Grad-sub, Prox-sub, marginal variances computed with Metropolis-Hastings correction, MYULA, Grad-sub, Prox-sub. The algorithms were performed with a step size of $\tau=1e-6$, a burn-in phase of $N=1e6$ and the number of samples $k=5e5$, the Metropolis-Hastings version with a burn-in phase of $N=3e6$ and the number of samples $k=1e6$. The range of the image pixels was $[0,1]$.}
\label{fig:visual_image_deconv}
\end{figure}

\begin{figure}
\centering
\includegraphics[scale=0.2]{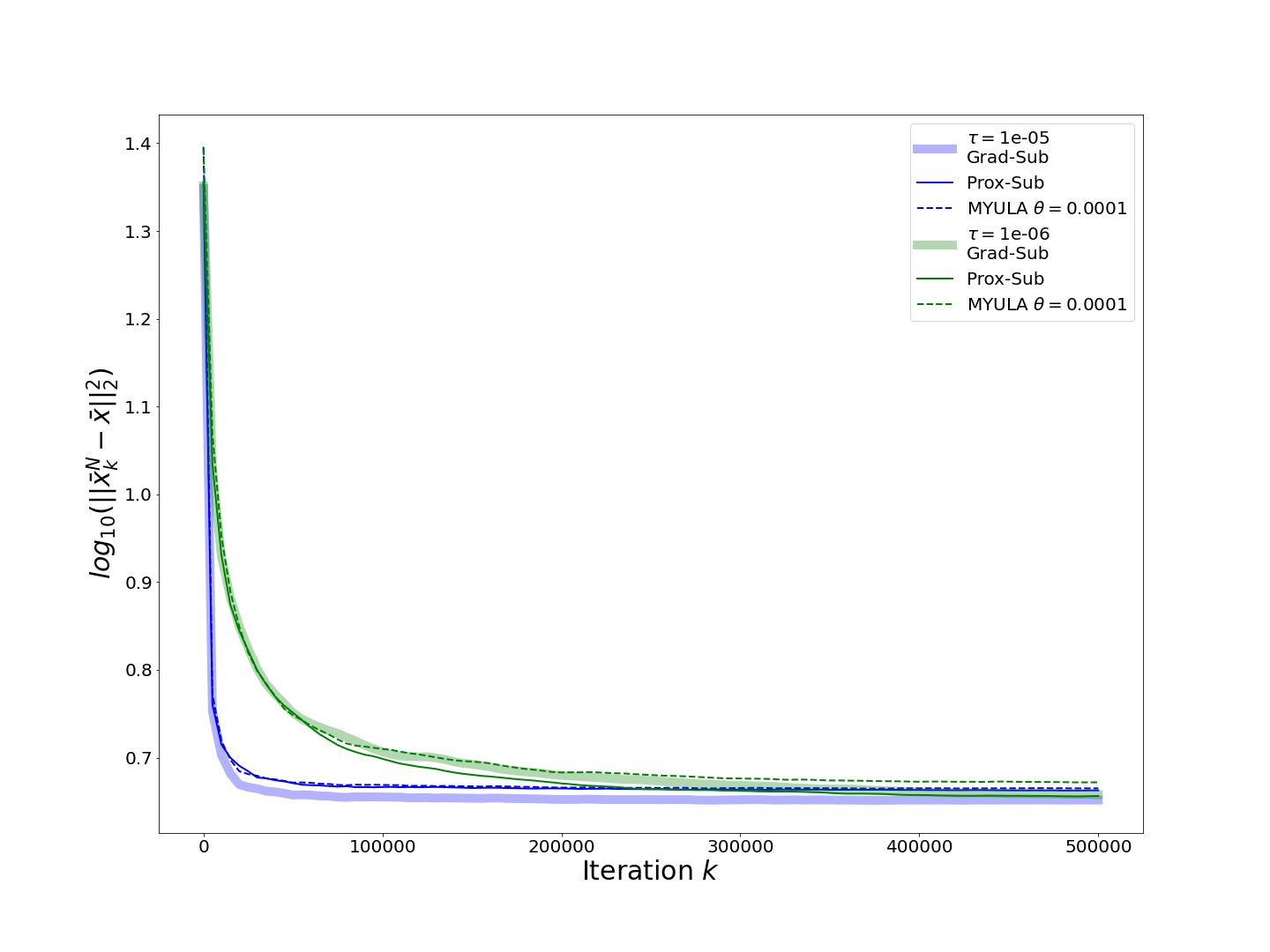}
\caption{Squared $L^2$ error between the estimated expected values using the proposed algorithms Grad-sub and Prox-sub and the ones obtained via the Metroplois-Hastings algorithm for the peppers image for deconvolution.}
\label{fig:convergence_image_deconv}
\end{figure}
In \Cref{table:computation_times_image} the computation times for image deconvolution for MYULA, Grad-sub, and Prox-sub are provided showing a similar result as above in that MYULA is computationally more expensive due to the iterative computation of proximal mappings.

\begin{table}[h]
\centering
\begin{tabular}{c|ccc|ccc}
& \multicolumn{3}{c|}{denoising} & \multicolumn{3}{c}{deconvolution}\\
$\tau$ & MYULA & Grad-sub & Prox-sub & MYULA & Grad-sub & Prox-sub\\
 \midrule
$1e-5$ & 55.61 & 0.59 & 0.65 & 43.43 & 0.92 & 1.06\\
$1e-6$ & 56.86 & 0.59 & 0.66 & 43.44 & 0.92 & 1.05
\end{tabular}
\vspace*{0.2cm}
\caption{Average computation times in seconds for 1000 iterations for different methods and step sizes $\tau$ for image denoising.}
\label{table:computation_times_image}
\end{table}

\section{Conclusion and Future Work}\label{sec:conclusion}
In this work we have proposed novel sampling algorithms for sampling from non-smooth densities of the form $\pi(x)\sim \exp(-F(x)-G(Kx))$ with functionals $F,G$ and a linear operator $K$. The methods are in particular applicable in the setting that $G$ is non-differentiable, and, for the first time, allow sampling with guaranteed convergence in applications with $F$ not being Lipschitz continuous. We have proven several non-asymptotic convergence results where convergence is obtained in the Kullback-Leibler divergence if $F$ is non-differentiable and in Wasserstein 2-distance with a faster rate in the case of stronger regularity assumptions on $F$. The presented experimental results confirm the proven convergence rates and provide empirical evidence of the practical relevance of the proposed methods in the field of Bayesian imaging.
\paragraph{Future Work}
An important issue to address in future work is existence and uniqueness of stationary measures for the proposed transition kernels $R_{\tilde{\tau},\tau}$ and, consequently, ergodicity of the resulting Markov chain which constitutes a difficult question due to the non-differentiability of the target density $\pi$. However, such results could reduce the computational complexity as it would obviate the need for running multiple Markov chains for the estimation of statistics of the target density $\pi$. This is due to the fact that ergodic theorems \cite[Chapter 17]{meyn2012markov} allow us to work with the running mean of a single chain instead of the mean over independent chains.

Another interesting line of possible future work concerns convergence guarantees of a primal-dual sampling algorithm inspired by \cite{chambolle2016ergodic}, as was for instance used in \cite{narnhofer2022posterior} without proving convergence. The possibility of avoiding step size constraints, similar to primal-dual minimization techniques, could provide a significant increase in speed of convergence.

\bibliographystyle{siamplain}
\bibliography{references}
\newpage
\appendix
\section{Proofs}\label{appendix_proofs}
\printProofs

\end{document}